\documentclass{birkjour}

\usepackage{amssymb, amsmath, amsthm, amsfonts, amscd,amsthm}
\usepackage{xcolor}
\usepackage[sort, numbers]{natbib}
\usepackage{datetime}
\usepackage{hyperref}
\hypersetup{
	colorlinks=true,
	linkcolor=cyan,
	filecolor=magenta,
	urlcolor=cyan,
	citecolor= cyan,
}
\usepackage{comment}
\usepackage{mathtools}
\usepackage{mathabx}
\usepackage{enumerate}
\usepackage[mathscr]{euscript}
\usepackage{url}
\usepackage{enumitem}
\usepackage{mathtools}
\usepackage{commath}
\usepackage[capitalize,nameinlink]{cleveref}
\bibliography{myrefs}

\newtheorem{theorem}{Theorem}[section]
\newtheorem*{theorem*}{Theorem}
\newtheorem{lemma}[theorem]{Lemma}
\newtheorem{corollary}[theorem]{Corollary}
\theoremstyle{definition}
\newtheorem{definition}[theorem]{Definition}

\theoremstyle{remark}
\newtheorem{remark}[theorem]{Remark}

\numberwithin{equation}{section}

\begin{document}
\title[The Weyl-von Neumann theorem ]{The Weyl-von Neumann theorem for antilinear skew-self-adjoint operators}
	
\author{G. Ramesh}
\address{Department of Mathematics, Indian Institute of Technology - Hyderabad, Kandi, Sangareddy, Telangana, India 502 284.}
\email{rameshg@math.iith.ac.in}

\keywords{Antilinear operator, Polar decomposition, complex skew-symmetric  operator, conjugation, anticonjugation, compact operator, Weyl-von Neumann theorem.}


\begin{abstract}
 In this article, we prove the Weyl-von Neumann theorem for antilinear skew-self-adjoint operators. More specifically, we prove the following:\\
 Let $A$ be an antilinear  skew-self-adjoint operator on a separable Hilbert space $H$ whose kernel is either even dimensional or infinite dimensional.  Let $1<p<\infty$.  Then for every $\epsilon>0$ there exists an antilinear skew block diagonal  operator $D$ and an antilinear Schatten $p$-class operator $K$ such that $A=K+D$ with $\|K\|_{p}<\epsilon$. 
 
 As a consequence of this, we prove the Weyl-von Neumann theorem for complex skew-symmetric operators:\\ 

 Let $\tau$ be a conjugation on $H$ and let $T$ be a $\tau$-skew-symmetric bounded linear operator with $\dim N(T)=\infty$ or $\dim N(T)$ is even.  Let  $1<p<\infty$. Then for every $\epsilon>0$, there exists a $\tau$-skew-symmetric Schatten $p$-class operator $K$, a skew-symmetric block  diagonal operator $D$  and a unitary operator $U$ such that $T=K+UDU^{tr}$ and $\|K\|_{p}<\epsilon$, where $U^{tr}$ is the transpose of $U$ with respect to an orthonormal basis ${\{e_n:n\in \mathbb N}\}$ such that $\tau(e_n)=e_n$ for each $n\in \mathbb N$.

 Furthermore, the above result holds even without any assumption on the dimension of 
$N(T)$, provided that $N(T)=N(T^*)$. 
 \end{abstract}
\maketitle
\section{Introduction and preliminaries}
\subsection{Introduction}
       In this article, we obtain  Weyl-von Neumann theorem for antilinear skew-self-adjoint bounded operators. An analogue of this theorem for antlinear self-adjoint bounded operators is discussed by Ruotsalainen in \cite{Santtu}. It is well known that the Weyl-von Neumann theorem asserts that a self-adjoint operator can be perturbed by a compact operator with an arbitrary small Hilbert-Schmidt norm to give a diagonal operator. As a result, the continuous spectrum can be turned into a pure point spectrum by a small compact perturbation. The same theorem is generalized to normal operators by I. D. Berg, which is fondly known as the Weyl-von Neumann-Berg theorem in the literature. 

In this article, we prove a Weyl-von Neumann  theorem for antilinear skew-self-adjoint operators. With the help of this result, we also prove a Weyl-von Neumann theorem for linear skew-symmetric operators.

Antilinearity is a natural phenomenon in bipartite quantum systems. It is well known that there is a one-to-one correspondence between vectors and certain antilinear maps, called EPR-maps (see \cite{Uhlmann} for more details). Antilinear operators have applications  in the study of quantum entanglement \cite{ArensVaradarajan} and the quantum teleportation \cite{Kuruczetal}. These maps plays a major role in the Tomita-Takesaki theory as well. A real linear operator on a Hilbert space can be written as a sum of complex linear and an antilinear operator \cite{Santtu2}. 

The development of the theory for antilinear operators on Hilbert spaces is rather limited in comparison to the theory of linear operators. This causes difficulties in developing this theory in a fruitful manner. But in particular cases, we can connect these operators for example, antilinear normal operators with conjugate normal operators, self-adjoint antilinear operators with complex symmetric operators and skew-self-adjoint operators with complex skew-symmetric operators. 

In this article, we prove that an antilinear skew-selfadjoint operator can be perturbed by an antilinear skew-self-adjoint Schatten $p$-class operator with arbitrary small Schatten $p$-norm ($1<p<\infty$) to get an antilinear "block skew-self-adjoint diagonal operator". Precisely, we prove the following theorem;
\begin{theorem}\label{wvN}
Let $A$ be a bounded skew-self-adjoint antilinear operator on a separable Hilbert space $H$ such that either $\dim N(A)=\infty$ or $\dim N(A)$ is even.  Then for every $\epsilon>0$ there exists an \textbf{antilinear block skew-diagonal  operator} $D$ and an antilinear skew-self-adjoint Schatten $p$-class operator $K$ such that $A=K+D$ and $\|K\|_{p}<\epsilon$, where $1<p<\infty$. 
\end{theorem}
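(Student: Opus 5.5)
The plan is to reduce the statement to the linear Weyl--von Neumann--Kuroda theorem applied to the positive operator $A^2$, and then transfer the resulting diagonalization back to $A$ using the antilinear structure. Since $A$ is antilinear and skew-self-adjoint, $A^*=-A$, so $A^2=-A^*A$ is a bounded linear operator and $-A^2=A^*A\geq 0$. Put $|A|=(A^*A)^{1/2}$. Then $A$ commutes with $|A|$, since $A$ commutes with $A^*A=-A^2$ and hence with every real polynomial in it, and so, by a limiting argument, with its square root. Write $A=J|A|$ for the polar decomposition, with $J$ an antilinear partial isometry on $\overline{R(A)}$. From $A^*=-A$ one gets $J^*=-J$ on $N(A)^\perp$, and therefore $J^2=-I$ on $N(A)^\perp$, i.e.\ $J$ is an anticonjugation there. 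On $N(A)$ we have $A=0$. The dimension hypothesis is exactly what allows us to choose an anticonjugation $J_0$ on $N(A)$ as well: an orthonormal basis can be split into pairs $(e,f)$ with $J_0e=f$ and $J_0f=-e$, extended antilinearly. This is possible precisely when $\dim N(A)$ is even or infinite. Setting $\widetilde J=J\oplus J_0$, we obtain an anticonjugation on all of $H$ that commutes with $|A|$ and satisfies $A=\widetilde J|A|$.

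The next step is a $\widetilde J$-invariant diagonalization of $|A|$. I would apply the Kuroda form of Weyl--von Neumann, which gives a diagonal perturbation with small Schatten $p$-norm for $p>1$. However, it has to be run so that the eigenvectors come in pairs $\{x,\widetilde Jx\}$. The Kuroda construction builds finite-rank spectral projections $P_n$ out of vectors $E(\Delta)h$ for dyadic spectral intervals $\Delta$ of $|A|$, where $E$ is the spectral measure of $|A|$. Since $\widetilde J$ commutes with $|A|$, it commutes with every $E(\Delta)$. So each time the construction picks a vector $E(\Delta)h$, I would also add $\widetilde JE(\Delta)h$. The resulting finite-rank projections $P_n$ are then $\widetilde J$-invariant and commute with $\widetilde J$.

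On a $\widetilde J$-invariant finite-dimensional subspace $M$, a Gram--Schmidt process that alternately picks $x$ and then $\widetilde Jx$ gives an orthonormal basis of pairs. This works because $\langle \widetilde Jx,x\rangle=-\langle \widetilde Jx,x\rangle$, which forces $x\perp \widetilde Jx$. This yields an orthonormal basis $\{x_k,\widetilde Jx_k\}$ of $H$ and a positive diagonal operator
\[
\Lambda=\sum_k \lambda_k\big(x_k\otimes x_k+\widetilde Jx_k\otimes \widetilde Jx_k\big),
\]
which commutes with $\widetilde J$. The remainder $K_0=|A|-\Lambda$ is self-adjoint, commutes with $\widetilde J$, and satisfies $\|K_0\|_p<\epsilon$.

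Now set $D=\widetilde J\Lambda$ and $K=\widetilde JK_0$. Then $A=K+D$, and $\|K\|_p=\|K_0\|_p<\epsilon$ because $\widetilde J$ is an antiunitary. The operator $K$ is skew-self-adjoint: $K^*=K_0\widetilde J^*=-K_0\widetilde J=-\widetilde JK_0=-K$. On each two-dimensional block $\mathrm{span}\{x_k,\widetilde Jx_k\}$, $D$ acts by $x_k\mapsto \lambda_k\widetilde Jx_k$ and $\widetilde Jx_k\mapsto -\lambda_k x_k$. This is exactly an antilinear $2\times 2$ skew block $\begin{pmatrix}0&-\lambda_k\\ \lambda_k&0\end{pmatrix}$, so $D$ is an antilinear block skew-diagonal operator in the sense of the statement.

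The main obstacle is the middle step: making Kuroda's approximation scheme compatible with $\widetilde J$ while keeping the Schatten-norm estimate. The estimate bounds $\|(I-P_n)|A|P_n\|_p$ by roughly $2^{-n}(\operatorname{rank}P_n)^{1/p}$. Doubling the rank by adding the $\widetilde J$-partners only multiplies this bound by the constant $2^{1/p}$, which can be absorbed by choosing finer dyadic intervals. I would verify carefully that $\widetilde J$ commutes with $E(\Delta)$, using that $\widetilde J$ commutes with $|A|$ and that $|A|$ has real spectrum. I would also check that the final diagonal part can be chosen with equal eigenvalues on each $\widetilde J$-pair; this follows by compressing onto $\widetilde J$-invariant blocks, since there $|A|$ commutes with the anticonjugation and its eigenvalues come with even multiplicity.
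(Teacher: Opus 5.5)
Your proof is correct, and its skeleton matches the paper's. Like you, the paper first writes $A=\kappa|A|=|A|\kappa$ with an anticonjugation $\kappa$ (Theorem \ref{polardecomp}). Its one-step lemma (Theorem \ref{rankprojection}) uses exactly your $\kappa$-invariant projections onto $\mathrm{span}\{E(\omega_k)f,\kappa E(\omega_k)f\}$, with the same rank-doubling Schatten estimate.

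The difference is where the perturbation lives and how the finite blocks are split. The paper perturbs $A$ directly by $-(I-P)AP-PA(I-P)$. Since $\kappa$ commutes with $P$ and $|A|$, this is just $-\kappa$ times your $\tilde J$-equivariant perturbation $(I-P)|A|P+P|A|(I-P)$ of $|A|$. The paper ends up with an antilinear skew-self-adjoint $D=\bigoplus D_n$ with finite-dimensional blocks. It then needs a Takagi-type normal form for antilinear skew-Hermitian matrices (Theorem \ref{skewdiagonalization}) to break each $D_n$ into $2\times 2$ blocks.

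Your route stays linear and self-adjoint until the last step. Each finite block of $|A|-K_0$ is a self-adjoint matrix commuting with an anticonjugation, so its eigenspaces are $\tilde J$-invariant. The spectral theorem plus pairing $x,\tilde Jx$ inside each eigenspace then suffices, with no Takagi factorization needed. Skew-self-adjointness of $K=\tilde JK_0$ and the block form of $D=\tilde J\Lambda$ come for free.

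It also gives slightly more: $D$ shares the anticonjugation of $A$, $|D|=\Lambda$ with $d_n\ge 0$, and $\||A|-|D|\|_p<\epsilon$ holds simultaneously. The paper's version stays entirely within antilinear operators but leans on the finite-dimensional Takagi form.

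Two bookkeeping points. First, the projections in the Kuroda step are onto spans of the vectors $E(\Delta)h,\tilde JE(\Delta)h$, not spectral projections. Second, at later stages you must use the spectral measure of the compression $(I-Q)|A|(I-Q)$ on $(I-Q)H$, not of $|A|$ itself. That compression still commutes with $\tilde J$, so the argument goes through unchanged.
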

\begin{definition}
    We call an antilinear bounded operator $D$ on $H$ to be a block skew-diagonal if there exists a sequence $(d_n)$ of complex numbers and an orthonormal basis ${\{e_n,f_n:n\in \mathbb{N}}\}$ such that  $De_n=d_nf_n$ and $Df_n=-e_n$ for all $n\in \mathbb N$. That is,  with respect to the orthonormal basis, the matrix of $D$ is nothing but $D=\displaystyle \bigoplus_{n=1}^{\infty} \begin{pmatrix}
    0&d_n\\
    -d_n&0
\end{pmatrix}$.
\end{definition}
Note that if $D$ is a block skew-diagonal, then $D$ is skew-self-adjoint and $D^2$ is a linear diagonal operator with respect to the same orthonomral basis as that of $D$ and the diagonal entries of $D^2$ are ${\{d_n^2: n\in \mathbb N}\}$.

Since the spectrum of an antilinear skew-self-adjoint operator is ${\{0}\}$ \cite[Proposition 2.6]{Santtu2}, we cannot get a diagonal operator in Theorem \ref{wvN}. But we get a block skew diagonal operator as given in (\ref{antilinearskewdiagonal}). 

We also prove a similar theorem for complex skew symmetric operators as well. This can be stated as follows:
\begin{theorem}\label{skewsymWvN}
Let $\tau$ be a conjugation on $H$ and let $T$ be a $\tau$-skew-symmetric operator with $\dim N(A)$ even or $\dim N(A)=\infty$. Then for every $\epsilon>0$, there exists a $\tau$-skew-symmetric Schatten $p$-class operator  $K$, a skew-symmetric operator $D$  and a unitary operator $U$ such that $T=K+UDU^{tr}$ and $\|K\|_{p}<\epsilon$, where $1<p<\infty$ and $A^{tr}$ denote the transpose of the operator $A$ with respect to an orthonormal basis ${\{e_n:n\in \mathbb{N}}\}$ of $H$ such that $\tau(e_n)=e_n$ for all $n\in \mathbb{N}$. 
\end{theorem}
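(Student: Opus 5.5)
The plan is to reduce Theorem \ref{skewsymWvN} to Theorem \ref{wvN} through the standard correspondence between $\tau$-skew-symmetric linear operators and antilinear skew-self-adjoint operators. Put $A:=\tau T$ (or $T\tau$; the choice will be made to match the conventions). Recall that $T$ is $\tau$-skew-symmetric when $\tau T\tau=-T^*$. Then $A$ is antilinear and bounded. Its antilinear adjoint, defined by $\langle Ax,y\rangle=\overline{\langle x,A^*y\rangle}$, is $A^*=T^*\tau$, and the skew-symmetry gives $A^*=T^*\tau=-\tau T=-A$. Hence $A$ is skew-self-adjoint. Since $\tau$ is an isometric bijection, $N(A)=N(T)$, so the dimension hypothesis carries over exactly. (The statement writes $N(A)$; I read it as $N(T)$.)

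Next, apply Theorem \ref{wvN} to $A$. This gives $A=K_0+D_0$, where $K_0$ is antilinear, skew-self-adjoint, Schatten $p$-class with $\|K_0\|_p<\epsilon$, and $D_0$ is block skew-diagonal with respect to an orthonormal basis $\{u_n,v_n\}$. Multiplying by $\tau$ gives $T=\tau K_0+\tau D_0$. Set $K:=\tau K_0$. This is linear. It is $\tau$-skew-symmetric, because $\tau K\tau=K_0\tau=-K_0^*\tau=-(\tau K_0)^*$. Its singular values coincide with those of $K_0$, since $|K|^2=K_0^*\tau\tau K_0=K_0^*K_0$. Therefore $\|K\|_p=\|K_0\|_p<\epsilon$.

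It remains to show that $\tau D_0=UDU^{tr}$ with $D$ skew-symmetric and block diagonal. Relabel the basis $\{e_n\}$ fixed by $\tau$ as $\{e_{2n-1},e_{2n}\}$. Let $U$ be the unitary with $Ue_{2n-1}=u_n$ and $Ue_{2n}=v_n$. With respect to $\{e_k\}$, the transpose is $U^{tr}=\tau U^*\tau$; this holds because $\tau$ is entrywise complex conjugation in that basis. Then
\[
U^*\tau D_0\,\tau U\,\tau=U^*\tau D_0 \tau U\tau ,
\]
and I define $D:=U^*\tau D_0\tau\, \tau U^{tr}$, i.e. $D=U^*(\tau D_0)\,\overline{U}$ with $\overline U=\tau U\tau=(U^{tr})^{*}$. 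With this choice $UDU^{tr}=\tau D_0$ automatically. What has to be checked is that $D$ has the block form $\bigoplus\begin{pmatrix}0&d_n\\-d_n&0\end{pmatrix}$ in $\{e_k\}$. Note that $D=U^*\tau D_0\tau U\tau\cdot\tau$ rewrites as $(U^*\tau)\,D_0\,(U^*\tau)^{-1}$ composed appropriately. Concretely, set $W:=\tau U$, an antiunitary. Then $D\,\tau=U^*\tau D_0\tau U=W^{-1}D_0W$. Since $W$ maps $e_{2n-1},e_{2n}$ to $\tau u_n,\tau v_n$, the computation should instead use the antiunitary $V=U\tau$, which maps $e_{2n-1}\mapsto u_n$ because $\tau e_k=e_k$. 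Then $\tau D_0=\tau VV^{-1}D_0V V^{-1}$, and $V^{-1}D_0V$ acts on $\{e_k\}$ exactly as $D_0$ acts on $\{u_n,v_n\}$. Thus $V^{-1}D_0V=\tau D$, where $D$ is the linear block matrix with entries $\pm d_n$, taking conjugates where needed.

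The main obstacle is this bookkeeping: matching $V^{-1}D_0V$ to $\tau D$, and identifying $\tau V\tau D\,V^{-1}$ with $UDU^{tr}$, with the conjugations of the $d_n$ and the antilinear adjoint conventions handled correctly. I expect that $U^{tr}=\tau U^*\tau$ and $V^{-1}=\tau U^*$ make it close. This gives $T=K+UDU^{tr}$, and $D$ is skew-symmetric because its matrix is antisymmetric.
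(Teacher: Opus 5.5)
Your reduction is the paper's, and the first half is correct. $A=\tau T$ is antilinear skew-self-adjoint with $N(A)=N(T)$, and $K=\tau K_0$ is $\tau$-skew-symmetric with $|K|=|K_0|$. The paper uses $T\tau$ and $K=\hat K\tau$, which is equally fine for this part.

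The gap is the last step, which you defer as bookkeeping. With your choices ($A=\tau T$ together with $Ue_{2n-1}=u_n$, $Ue_{2n}=v_n$) it does not close, for two reasons.

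\emph{First candidate.} The requirement $\tau D_0=UDU^{tr}=UD\tau U^*\tau$ forces $D=U^*\tau D_0\tau U\tau$. Then $De_{2n-1}=U^*\tau D_0(\tau u_n)$, and $\tau u_n$ need not lie in $\mathrm{span}\{u_n,v_n\}$. For example, on $\mathrm{span}\{e_1,\dots,e_4\}$ take $u_1=(e_1+ie_3)/\sqrt2$, $u_2=(e_1-ie_3)/\sqrt2$, $v_1=e_2$, $v_2=e_4$, $d_1=1$, $d_2=2$. Then $\tau u_1=u_2$ and $De_1=2e_4$. So this $D$ is antisymmetric but not block diagonal in the pairs $(e_{2n-1},e_{2n})$.

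\emph{Second candidate.} $D=\tau V^{-1}D_0V=U^*D_0U\tau$ is block diagonal, with $De_{2n-1}=d_ne_{2n}$ and $De_{2n}=-d_ne_{2n-1}$. However,
\[
UDU^{tr}=UU^*D_0U\tau\,\tau U^*\tau=D_0\tau,
\]
not $\tau D_0$. The identity $\tau V\tau D V^{-1}=UDU^{tr}$ you hope for is equivalent to $\tau D_0=D_0\tau$. This fails in the same example: $\tau D_0\tau u_2=v_1\neq 2v_2=D_0u_2$.

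The missing observation is that the side on which $\tau$ is attached must match how $U$ pairs the bases. There are two ways to repair this.

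\begin{enumerate}
\item Follow the paper and set $A=T\tau$. Then $T=K_0\tau+D_0\tau$, and your $U$ with $D=U^*D_0U\tau$ gives exactly $T=K_0\tau+UDU^{tr}$, by the computation above.
\item Keep $A=\tau T$ but replace $U$ by $\tilde U=\tau U\tau$, that is, $\tilde Ue_{2n-1}=\tau u_n$ and $\tilde Ue_{2n}=\tau v_n$. Write $\tau D_0=(\tau D_0\tau)\tau$, where $\tau D_0\tau$ is block skew-diagonal in $\{\tau u_n,\tau v_n\}$ with entries $\pm\overline{d_n}$. Then $\tilde D=\tilde U^*(\tau D_0\tau)\tilde U\tau$ is block diagonal and $\tilde U\tilde D\tilde U^{tr}=\tau D_0$.
\end{enumerate}

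In either version, the block form of $D$ gives $D^{tr}=-D$. By Lemma \ref{transposeadjoint} this means $\tau D^*\tau=-D$, so $D$ is skew-symmetric.
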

   \subsection{Preliminaries} Throughout the article, we work with complex separable Hilbert space which will be denoted by $H$. The space of all bounded linear operators on $H$ is denoted by $\mathcal B(H)$. For $T\in \mathcal B(H)$, the null space is defined by $N(T) =\{x\in H:Tx= 0\}$ and the range space is defined by $R(T) =\{Tx:x\in H\}$.

The adjoint of $T\in \mathcal B(H)$ will be denoted by  $T^*$ which satisfy $\langle Tx,y\rangle=\langle x,T^*y\rangle $ for all $x,y\in H$. We say $T$ to be self-adjoint if $T=T^*$, anti-self-adjoint if $T^*=-T$, normal if $T^*T=TT^*$ and unitary if $T^* T=TT^*=I$. We say $T$ to be an  isometry if $\left\|Tx\right\|=\left\|x\right\|$ for all $x\in H$ and partial isometry if $\left\|T x\right\|=\left\|x\right\|$ for all $x\in N(T)^{\perp}$.

If $T$ is self-adjoint and $\langle Tx, x\rangle \geq 0$ for every $x\in H$, we say $T$ is positive.

For $T\in \mathcal{B}(H)$, the polar decomposition of $T$ is given by
\begin{equation}\label{polardecomposition}
T = U|T |,
\end{equation}
where $U$ is a partial isometry with $N(U) =N(T)$ and $|T|=(T^*T)^{\frac{1}{2}}$, the positive square root of $T^*T$.

An operator $A:H\rightarrow H$ is said to be antilinear if $A(x+y)=Ax+Ay$ and $A(\alpha x)=\bar{\alpha}Ax$ for all $x,y\in H$ and $\alpha \in \mathbb{C}$. We denote the set of all antilinear operators on $H$ by $B_{a}(H)$. The set of all antilinear compact operators on $H$ will be denoted by $\mathcal K_{a}(H)$.
\begin{definition}\cite{ptak}\label{antilineardefn}
	For a bounded antilinear operator $A$ on $H$, there is a unique antilinear operator $A^{\#}$ called the antilinear adjoint of $A$, which satisfies
	\begin{equation}
	\langle Ax,y \rangle =\overline{\langle x, A^{\#}y \rangle}, \; \text{for all}\,\,x,y \in H.
	\end{equation}
 \end{definition}
 \begin{definition}
 An antilinear operator $A$ is called 
\begin{enumerate}
\item antilinear self-adjoint if $A^{\#}=A$
\item antilinear skew-self-adjoint if $A^{\#}=-A$
\item antilinear normal if $A^{\#}A=AA^{\#}$
\item antilinear unitary if $A^{\#}A=AA^{\#}=I$.
\end{enumerate}
\end{definition}

\begin{definition}
	 A map $\tau: H\rightarrow H$ is said to be a conjugation if it satisfies the following:
	\begin{enumerate}[label=(\roman*)]
		\item $\tau$ is antilinear, i.e, $\tau(\alpha x +y)=\bar{\alpha}\tau(x)+\tau(y)$ for $x,y\in H$ and $\alpha\in \mathbb{C}$,
		\item  $\tau^2=I$,
		\item  $\langle \tau x,\tau y\rangle=\langle y,x\rangle$, for all $x,y\in H$.
		
	\end{enumerate}
	That is, $\tau$ is antilinear isometric involution.
\end{definition}
It is well known that given an orthonormal basis ${\{e_n:n\in \mathbb{N}}\}$ of $H$, there exists a conjugation $\tau$ on $H$ such that $\tau(e_n)=e_n$ for all $n\in \mathbb{N}$. That is, 
\begin{equation}\label{conjugationeq}
    \tau(x)=\displaystyle \sum_{n=1}^{\infty}\overline{\langle x,e_n\rangle} e_n,\; \text{for all}\; x\in H.
\end{equation}
On the other hand, if $\tau$ is a conjugation on $H$, then there exists an orthonormal basis ${\{e_n:n\in \mathbb{N}}\}$ such that $\tau(e_n)=e_n$ for all $n\in \mathbb{N}$. That is, $\tau$ is given by equation \ref{conjugationeq}.

Let  $J\in \mathcal B_{a}(H)$. Then $J$ is said to be a partial conjugation if  $N(J)^{\bot}$ is invariant under $J$ and the map $J|_{N(J)^{\bot}}$  is a conjugation on $N(J)^\bot$. In particular, the linear operator $J^2$, is the orthogonal projection onto $N(J)^\bot$.
Hence the partial conjugation $J$ can be extended to a conjugation $\tilde{J}=J\oplus J'$ on the entire space $H$, where $J'$ is a partial conjugation on $N(J)$ (See \cite[Section 2.2]{garcia2} for more details).

\begin{definition}\cite{garcia1}
	Let $T\in \mathcal{B}(H)$ and $\tau$ be a conjugation on $H$. Then $T$ is said to be  $\tau$-symmetric if $T^*=\tau T \tau$. Equivalently, $T^*\tau=\tau T$, or $T\tau=\tau T^*$.
	
	A bounded linear operator $T$ is said to be complex symmetric if there exists a conjugation $\tau$ with respect to which it is $\tau$-symmetric. We refer to \cite{garcia1,garcia2} for more details about complex symmetric operators.
\end{definition}
\begin{definition}\cite{LiZhou}
	Let $\tau$ be a conjugation on $H$ and $T\in \mathcal{B}(H)$. Then $T$ is said to be a $\tau$-skew-symmetric if $\tau T^*\tau=-T$.
\end{definition}
An operator $T\in \mathcal{B}(H)$ is said to be skew-symmetric if there exists a conjugation $\tau$ on $H$ such that $T$ is $\tau$ skew-symmetric.

\begin{definition}\cite{LiZhou}
	An anticonjugation on $H$ is a antilinear map $k:H\rightarrow H$ satisfying $k^2=-I$ and $\langle kx,ky\rangle=\langle y,x\rangle$ for all $x,y\in H$.
\end{definition}
 A characterization of anticonjugations on a Hilbert space can be given as follows:
\begin{lemma}\cite[Lemma 2.2]{LiZhou}
Let $\kappa$ be an isometric antilinear map on $H$. Then $\kappa$ is an anticonjugation
if and only if there exists an  orthonormal basis ${\{e_n, f_n:n\in \mathbb{N} }\}$ such that $\kappa e_n= f_n$ and $\kappa f_n = -e_n$ for each $n\in  \mathbb{N}$.
\end{lemma}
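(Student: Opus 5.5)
The plan is to handle the two directions separately. The converse direction is a direct verification. The forward direction is a Gram--Schmidt/Zorn-type construction driven by one orthogonality identity.

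For the converse, suppose an orthonormal basis $\{e_n,f_n:n\in\mathbb N\}$ with $\kappa e_n=f_n$ and $\kappa f_n=-e_n$ is given. Write $x=\sum_n(\alpha_n e_n+\beta_n f_n)$. Antilinearity and continuity give
\[
\kappa x=\sum_n(\bar\alpha_n f_n-\bar\beta_n e_n),
\qquad
\kappa^2x=\sum_n(-\alpha_n e_n-\beta_n f_n)=-x .
\]
Similarly, for $y=\sum_n(\gamma_n e_n+\delta_n f_n)$, expanding in the basis gives
\[
\langle \kappa x,\kappa y\rangle=\sum_n(\bar\alpha_n\gamma_n+\bar\beta_n\delta_n)=\langle y,x\rangle .
\]
Hence $\kappa$ is an anticonjugation.

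For the forward direction, assume $\kappa^2=-I$ and $\langle\kappa x,\kappa y\rangle=\langle y,x\rangle$. The key observation is that $x\perp\kappa x$ for every $x$. To see this, apply the identity with the pair $(x,\kappa x)$:
\[
\langle \kappa x,\kappa\kappa x\rangle=\langle\kappa x,x\rangle .
\]
The left side equals $\langle\kappa x,-x\rangle=-\langle\kappa x,x\rangle$, so $\langle\kappa x,x\rangle=0$. Moreover $\|\kappa x\|=\|x\|$.

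Next, show that orthogonal complements of $\kappa$-invariant closed subspaces are $\kappa$-invariant. Suppose $M$ is closed with $\kappa M\subseteq M$, and take $y\in M^\perp$ and $m\in M$. Since $-\kappa m\in M$, we get
\[
\langle\kappa y,m\rangle=\langle\kappa y,\kappa(-\kappa m)\rangle=\langle -\kappa m,y\rangle=0 .
\]
So $\kappa M^\perp\subseteq M^\perp$.

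The construction is then inductive (or via Zorn's lemma, though separability lets us use plain induction along a dense sequence). Start with a unit vector $e_1$ and set $f_1=\kappa e_1$. By the observation, $\{e_1,f_1\}$ is orthonormal. Its span $M_1$ is $\kappa$-invariant, because $\kappa f_1=\kappa^2e_1=-e_1$ and $\kappa$ is antilinear. Having built a $\kappa$-invariant span $M_k$ of orthonormal pairs $\{e_j,f_j\}_{j\le k}$, choose a unit vector $e_{k+1}\in M_k^\perp$ and put $f_{k+1}=\kappa e_{k+1}$. Since $M_k^\perp$ is invariant, $f_{k+1}\in M_k^\perp$ and $f_{k+1}\perp e_{k+1}$, so the enlarged family remains orthonormal with $\kappa$-invariant span.

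To guarantee completeness, fix a dense sequence $(x_m)$ in $H$. At each stage, take $e_{k+1}$ to be the normalized projection onto $M_k^\perp$ of the first $x_m$ not already in $M_k$. Then every $x_m$ lies in $\overline{\bigcup_k M_k}$, so the resulting family is an orthonormal basis. The process cannot stop after finitely many steps when $\dim H=\infty$. In finite dimension the same argument shows that $\dim H$ is even.

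The main point needing care is the bookkeeping in this exhaustion step: showing that the pairs produced really span $H$, which follows from the invariance of the complements and the density argument. The remaining verifications are routine.
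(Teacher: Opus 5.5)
Your proof is correct and complete. The paper gives no proof of this lemma: it is quoted from Li--Zhou and used as a black box, for instance at the start of the proof of Theorem \ref{wvN}, where an orthonormal family $\{e_n\}$ with $\{e_n,\kappa e_n\}$ an orthonormal basis is simply assumed. So there is no argument in the paper to match yours against.

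Your self-contained route is consistent with what the paper does prove elsewhere:
\begin{itemize}
\item Your key identity $\langle \kappa x,x\rangle=0$ is exactly Lemma \ref{orthogonallemma}, obtained by the same one-line computation.
\item Your observation that $\kappa M\subseteq M$ forces $\kappa M^{\perp}\subseteq M^{\perp}$ is the fact the paper invokes, via $\kappa^{\#}=-\kappa$, when it asserts in the converse part of Theorem \ref{polardecomp} that $N(A)$ reduces $\kappa$.
\item Your finite-dimensional remark recovers a fact the paper imports from Garcia--Tener in that same converse argument: no anticonjugation exists on an odd-dimensional space.
\end{itemize}

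The exhaustion bookkeeping is sound. If $x_{m_k}$ is the first term outside $M_k$, then $x_{m_k}\in M_{k+1}$, so the indices $m_k$ strictly increase and every $x_m$ eventually lies in some $M_k$. In the converse direction, the isometry hypothesis is used exactly where it is needed: to justify passing $\kappa$ through the infinite basis expansion.
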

\section{Main Results}

In this section, we prove our main results.  We accomplish this by establishing a factorization theorem for antilinear skew-self-adjoint operators. This can be compared to \cite[Theorem 3.16]{RSVpolar}.
\begin{theorem}\label{polardecomp}
   Let $A\in \mathcal B_{a}(H)$. Then the following are equivalent.
   \begin{enumerate}
   \item\label{sksa} $A$ is skew-self-adjoint with $\dim N(T)=\infty$ or $\dim N(T)$ even
\item\label{factorization} There exists an anti-conjugation $k$ on $H$ such that
$A=k|A|$ with $k|A|=|A|k$,where $|A|$ is the unique positive square root of the linear operator $A^{\#}A$.
\end{enumerate}
\end{theorem}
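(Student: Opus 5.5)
Note first that the statement writes $N(T)$ where $N(A)$ is meant; I read it that way throughout.

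\emph{Easy direction, (\ref{factorization}) $\Rightarrow$ (\ref{sksa}).} Suppose $A=k|A|$ with $k$ an anticonjugation commuting with $|A|$. An anticonjugation satisfies $k^{\#}=-k$: from $k^2=-I$ and $\langle kx,ky\rangle=\langle y,x\rangle$ one gets $\langle kx,y\rangle=-\langle kx,k(ky)\rangle=-\langle ky,x\rangle=-\overline{\langle x,ky\rangle}$. For antilinear operators $(k|A|)^{\#}=|A|k^{\#}$, so
\[
A^{\#}=|A|k^{\#}=-|A|k=-k|A|=-A .
\]
Since $k$ is injective, $N(A)=N(|A|)$. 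Because $k$ commutes with $|A|$, $N(|A|)$ is invariant under $k$, and $k$ restricts to an anticonjugation there. By the Li--Zhou lemma, applied on that subspace, it has an orthonormal basis of the form $\{e_n,f_n\}$ with $f_n=ke_n$. Hence its dimension is even or infinite.

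\emph{Main direction, (\ref{sksa}) $\Rightarrow$ (\ref{factorization}).} Set $A^{\#}A=-A^2$; this is linear and positive, since $\langle -A^2x,x\rangle=\langle A^{\#}Ax,x\rangle=\|Ax\|^2$. So $|A|=(-A^2)^{1/2}$. The operator $A$ commutes with $A^2$, hence with every polynomial in $A^2$. Taking norm limits of polynomials, it commutes with $|A|$, so $A$ commutes with real continuous functions of $A^2$. Note that an antilinear operator commutes with real functional calculus but would conjugate complex coefficients, which is why only real functions appear.

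Next build the antilinear polar decomposition $A=V|A|$ in the usual way. Define $V|A|x=Ax$, which is isometric on $\overline{R(|A|)}$, and set $V=0$ on $N(|A|)=N(A)$. Then $V$ is an antilinear partial isometry with initial space and final space both equal to $N(A)^\perp=\overline{R(A)}$; these coincide because $A^{\#}=-A$.

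Using commutation, $V$ commutes with $|A|$ on $R(|A|)$: $V|A|(|A|x)=A|A|x=|A|Ax=|A|V|A|x$. By density, $V|A|=|A|V$. Also $V^2|A|^2=A^2=-|A|^2$, so $V^2=-I$ on $\overline{R(|A|)}$. The identity $\langle Vx,Vy\rangle=\langle y,x\rangle$ follows from the isometry property together with antilinearity, since $\langle Ax,Ay\rangle=\langle A^{\#}Ay,x\rangle$. Therefore $V$ restricted to $N(A)^\perp$ is an anticonjugation.

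\emph{Extension to the kernel.} On $N(A)$, use the dimension hypothesis: choose an orthonormal basis $\{u_n,v_n\}$ of $N(A)$, finite of even length or countably infinite, and define the anticonjugation $k'u_n=v_n$, $k'v_n=-u_n$ as in the Li--Zhou lemma. Put $k=V\oplus k'$. Then $k$ is an anticonjugation and $k|A|=V|A|=A$. For commutation, $k|A|=|A|k$ holds on $N(A)^\perp$ by the previous step, and on $N(A)$ both sides vanish because $k'$ maps $N(A)$ into itself.

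\emph{Main obstacle.} The delicate point is the careful verification that $V$ is a genuine anticonjugation on $N(A)^\perp$, namely $V^2=-I$ there together with the inner-product identity, while handling antilinearity in the functional calculus. Showing that $V$ commutes with $|A|$ is the crux.
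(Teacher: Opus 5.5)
Your proof is correct, and in the main direction it takes a different route from the paper. The paper never builds the polar decomposition itself. It fixes a conjugation $\tau$ and observes that $T=\tau A$ is a linear $\tau$-skew-symmetric operator with $|T|=|A|$. It then imports from \cite[Theorem 3.8]{RSVpolar} an antilinear partial isometry $J$ with $\tau A=\tau J|T|$ and $J|T|=|T|J$. From this it argues that $\tilde J=J|_{N(A)^{\perp}}$ is antiunitary with $\tilde J^{\#}=-\tilde J$, and extends across $N(A)$ in three separate cases.

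You instead construct $V$ directly from $V|A|x=Ax$. You get $A|A|=|A|A$ by approximating $t\mapsto\sqrt t$ with real polynomials, correctly noting that only real coefficients pass through an antilinear operator. You then obtain $V^2=-I$ on $N(A)^{\perp}$ from $V^2|A|^2=A^2=-|A|^2$, together with the density of $R(|A|^2)$ in $\overline{R(|A|)}$.

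What each route buys: yours is self-contained and shows exactly where skew-adjointness is used, namely initial space equal to final space and $V^2=-I$; the paper's citation hides this. The paper's route is shorter and ties the antilinear statement to the existing linear theory of skew-symmetric operators, which it reuses later for Theorem \ref{skewsymWvN}.

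In the converse you are more complete than the paper. You actually verify $A^{\#}=-A$ from $k^{\#}=-k$ and $k|A|=|A|k$, which the paper leaves implicit. Your invariance argument $|A|ky=k|A|y=0$ is also cleaner than the paper's.

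One small point: the Li--Zhou lemma as quoted in the paper is stated for a basis indexed by $\mathbb N$. When $N(A)$ is finite-dimensional you need its finite analogue. Take a unit vector $h$; then $\langle h,kh\rangle=0$, and both $\mathrm{span}\{h,kh\}$ and its orthogonal complement are $k$-invariant, so you can induct. This is what the paper gets from \cite[Lemma 4.3]{GarciaTener}.
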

\begin{proof}
First assume that $A^{\#}=-A$. Let $\tau$ be a conjugation on $H$. Then $T=\tau A$ is a $\tau$-skew-symmetric operator. By \cite[Theorem 3.8]{RSVpolar}, there exists a unique antilinear partial isometry $J$ on $H$ such that $\tau A=\tau J|T|$ and $J|T|=|T|J$.  Note that $|T|=|A|$. Hence, we have $A=J|A|=|A|J$. Since $N(J)=N(|T|)=N(|A|)=N(A)$, it is clear that $N(A)$ is invariant under $A$. Since $A^{\#}=-A$, it follows that $N(J)$ is a reducing subspace for $A$, so we can write $J=0\oplus \tilde{J}$, where $\tilde{J}=J|_{N(A)^{\bot}}$. Clearly $\tilde{J}$ is an antiunitary on $N(A)^{\bot}$. Since $A^{\#}=-A$, we get that $J=-J^{\#}$ on $R(|A|)$, in turn  on $\overline{R(|A|)}=N(A)^{\bot}$. This means that $\tilde{J}^{\#}=-\tilde{J}$. 

We consider the following cases which exhaust all possibilities;\\
\noindent Case 1: $N(A)={\{0}\}$;\\
In this case, $\tilde{J}$ is antiunitary. We can define $\kappa:=\tilde{J}$.

\noindent Case 2: $N(A)$ is finite dimensional\\
In this case $N(A)$ must be even dimensional, say $\dim N(A)=2n$, where $n\in \mathbb N$. Let ${\{e_1,f_1,e_2,f_2,\dots, e_{n}, f_{n}}\}$ be an orthonormal bsasis for $N(A)$.  Define a anti conjugation $\kappa_0$ on $N(A)$ such that $\kappa_0(e_j)=f_j$ and $\kappa_0(f_j)=-e_j$ for $j=1,2,\dots, n$. Next, define $\kappa=\kappa_0\oplus \tilde J$ on $N(A)\oplus N(A)^{\bot}$. Then clearly, $\kappa$ is an antiunitary satisfaying the stated conditions.

\noindent Case 3: $N(A)$ is infinite dimensional

Choose an anti-conjugation $k_0$ on $N(A)$. Then $k=k_0\oplus \tilde{J}$ on $N(A)\oplus N(A)^{\bot}$ is an anti-conjugation on $H$ and it can be easily seen that $\kappa$ is anti conjugation and $A=k|A|=|A|k$. 

On the otherhand, let $\kappa$ be an anticonjugation satisfying $\kappa |A|=|A|\kappa$. Since $N(A)$ is invariant under $A$, it is also invariant under $\kappa$ as $\kappa A=A\kappa$. As $\kappa^{\#}=-\kappa$, it clear that $N(A)$ reduces $\kappa$. Let $\kappa=\kappa_0\oplus k_1$, where $\kappa_0=\kappa|_{N(A)}$ and $\kappa_1=\kappa|_{N(A)^{\bot}}$. Then clearly $k_0$ and $\kappa_1$ are conjugations on $N(A)$ and $N(A)^{\bot}$, respectively. But by \cite[Lemma 4.3]{GarciaTener}, it follows that there does not exists a anticonjugation on an odd dimensional Hilbert space. That is, $N(A)$ must be even dimensional.

\end{proof}
\begin{remark}
Theorem \ref{polardecomp} is analogous to that of \cite[Corollary 3.4]{Szhu2015} proved for bounded linear skew symmetric operators on a Hilbert space $H$.

\end{remark}

\begin{lemma}\cite[lemma 3.3]{Santtu}
Let $A$ be an antilinear bounded operator and $T$ be a self-adjoint complex linear operator with the spectral representation $T=\int \lambda dE(\lambda)$, where $E$ is the spectral measure for $T$. If $AT=TA$, then $E(M)A=AE(M)$ for all $M\in \Sigma$.
\end{lemma}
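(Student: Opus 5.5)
The plan is to reduce the commutation of $A$ with spectral projections to a statement about functions of $T$, using the fact that $T$ is self-adjoint and $A$ is antilinear. From $AT=TA$ I would first get $A p(T) = \bar p(T) A$ for every polynomial $p$ with complex coefficients, where $\bar p$ denotes the polynomial with conjugated coefficients. Indeed, $A(cT^k x)=\bar c\,T^kAx$ by antilinearity, and $T^k A = A T^k$ by induction. For real polynomials this simply says $A p(T)=p(T)A$, and real polynomials suffice since $\sigma(T)\subset\mathbb R$.

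Next I would pass to continuous functions. For real-valued $f\in C(\sigma(T))$, choose real polynomials $p_n\to f$ uniformly on $\sigma(T)$ by Weierstrass. The continuous functional calculus gives $\|p_n(T)-f(T)\|\to 0$. Since $A$ is bounded and real linear, $\|A(p_n(T)-f(T))x\|\le\|A\|\,\|(p_n(T)-f(T))x\|$, so taking limits yields $Af(T)=f(T)A$.

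Then I would pass to bounded Borel functions, which is where the spectral measure enters. Fix $x,y\in H$ and consider the two complex measures $\mu_{x,y}(M)=\langle E(M)x,y\rangle$ and $\nu_{x,y}(M)=\langle E(M)Ax,y\rangle$. For a real bounded Borel $f$ I want
\[
\langle f(T)Ax,y\rangle=\langle Af(T)x,y\rangle=\overline{\langle f(T)x,A^{\#}y\rangle},
\]
using the antilinear adjoint from Definition \ref{antilineardefn}. For real continuous $f$ this holds by the previous step. Both extreme sides are integrals of $f$ against complex measures: $\int f\,d\nu_{x,y}$ on the left, and $\overline{\int f\,d\mu_{x,A^{\#}y}}=\int f\,d\overline{\mu_{x,A^{\#}y}}$ on the right, the last equality because $f$ is real. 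Two regular complex Borel measures on a compact subset of $\mathbb R$ that agree on all real continuous functions agree on all complex continuous functions by linearity, and hence coincide by the Riesz representation theorem. Taking $f=\chi_M$ then gives $\langle E(M)Ax,y\rangle=\overline{\langle E(M)x,A^{\#}y\rangle}=\langle AE(M)x,y\rangle$ for all $y$, so $E(M)A=AE(M)$.

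The main obstacle is keeping the conjugations straight in the measure-theoretic step. Restricting to real-valued $f$ from the start avoids any mismatch between $\bar f$ and $f$. It also ensures that conjugating the integral corresponds exactly to integrating against the conjugate measure, which is again a regular complex measure, so uniqueness in the Riesz representation theorem applies.
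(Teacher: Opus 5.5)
Your proof is correct. The paper gives no argument for this lemma: it is quoted from Ruotsalainen \cite{Santtu} and used as a black box. It is what justifies $\kappa E(M)=E(M)\kappa$ in the properties of $G$ and in the proof of Theorem \ref{rankprojection}. So there is no in-paper proof to compare yours against.

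Your route is the standard one from the linear case:
\begin{itemize}
\item first real polynomials;
\item then real continuous functions, via Weierstrass and $\|g(T)\|=\sup_{\sigma(T)}|g|$;
\item then spectral projections, via uniqueness in the Riesz representation theorem applied to $\mu_{Ax,y}$ and $\overline{\mu_{x,A^{\#}y}}$.
\end{itemize}
The only genuinely antilinear point is that $\overline{\int f\,d\mu}=\int f\,d\overline{\mu}$ requires $f$ to be real. You handle this correctly by never leaving real-valued $f$. The closing identity $\overline{\langle E(M)x,A^{\#}y\rangle}=\langle AE(M)x,y\rangle$ is just the defining relation of $A^{\#}$ applied to $E(M)x$. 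Your opening remark that $Ap(T)=\bar p(T)A$ for complex $p$ is true but never used.

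If you would rather reduce to a known linear statement, fix any conjugation $\tau$ and set $X=\tau A$. Then $X$ is linear and $XT=(\tau T\tau)X$, where $\tau T\tau$ is self-adjoint with spectral measure $\tau E(\cdot)\tau$. The linear intertwining lemma then gives $\tau AE(M)=\tau E(M)\tau\,\tau A=\tau E(M)A$, hence $AE(M)=E(M)A$. That version is shorter on the page, but it relies on the intertwining form of the linear result, which is itself proved by your measure argument or by a $2\times 2$ matrix trick. Your argument is self-contained.
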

\begin{lemma}\label{Schattennormest} \cite[Lemma 4.4]{Santtu}.
For a finite rank antilinear operator $A$ of  rank at most $n$, we have $\|A\|_{p}\leq n^{\frac{1}{p}}\|A\|$ \; ($1<p<\infty$).
\end{lemma}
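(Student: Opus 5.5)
The plan is to reduce everything to the singular values of $A$, that is, to the eigenvalues of the positive linear operator $|A|=(A^{\#}A)^{1/2}$. I take the Schatten $p$-norm of an antilinear operator to be $\|A\|_p=\bigl(\sum_j s_j(A)^p\bigr)^{1/p}$, where the $s_j(A)$ are the eigenvalues of $|A|$ counted with multiplicity. Equivalently, $\|A\|_p=\||A|\|_p$ in the usual linear sense, and the proof uses only this. The estimate then follows from two facts: at most $n$ of the $s_j$ are nonzero, and each $s_j$ is at most $\|A\|$.

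\textbf{Step 1: $\|\,|A|x\|=\|Ax\|$ for all $x\in H$.} First, $A^{\#}A$ is complex linear, as the composition of two antilinear maps. Putting $y=Ax$ in Definition \ref{antilineardefn} gives
\[
\|Ax\|^2=\langle Ax,Ax\rangle=\overline{\langle x,A^{\#}Ax\rangle}=\langle A^{\#}Ax,x\rangle .
\]
Hence $A^{\#}A$ is a positive linear operator, since it is a bounded linear operator with nonnegative quadratic form on a complex space. Therefore $\|\,|A|x\|^2=\langle A^{\#}Ax,x\rangle=\|Ax\|^2$. This has two consequences:
\begin{itemize}
\item $\||A|\|=\|A\|$, so every singular value $s_j\le\|A\|$;
\item $N(|A|)=N(A)$.
\end{itemize}

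\textbf{Step 2: $\operatorname{rank}|A|\le n$.} To transfer the rank from $A$ to $|A|$, fix a conjugation $\tau$ on $H$ and set $T=\tau A$. This $T$ is a bounded complex linear operator with $N(T)=N(A)$ and $\dim R(T)=\dim R(A)\le n$, because $\tau$ is a bijective isometry. For the linear operator $T$, $\dim N(T)^{\perp}=\dim R(T)$. Combining this with $N(|A|)=N(A)$ and the self-adjointness of $|A|$ gives
\[
\dim \overline{R(|A|)}=\dim N(|A|)^{\perp}=\dim N(A)^{\perp}=\dim R(T)\le n .
\]
So $|A|$ is a positive operator of rank at most $n$. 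By the spectral theorem it has at most $n$ nonzero eigenvalues $s_1,\dots,s_m$ with $m\le n$, counted with multiplicity.

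\textbf{Step 3: conclusion.} Combining the two steps,
\[
\|A\|_p^p=\sum_{j=1}^{m}s_j^p\le m\,\|A\|^p\le n\,\|A\|^p ,
\]
and taking $p$-th roots gives $\|A\|_p\le n^{1/p}\|A\|$.

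The only step that needs care is Step 2: making sure that rank–nullity and the identification of singular values go through for antilinear maps. The device of composing with a conjugation $\tau$ turns $A$ into a linear operator with the same kernel and the same modulus, since $T^*T=A^{\#}\tau\tau A=A^{\#}A$. This handles both points at once.
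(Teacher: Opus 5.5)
Your proof is correct. Step 1 gives $\|\,|A|x\|=\|Ax\|$, hence $s_j(A)\le\|A\|$ and $N(|A|)=N(A)$. Step 2 correctly shows that $|A|$ has rank at most $n$, so $\|A\|_p^p\le n\|A\|^p$; this in fact holds for every $p>0$, not only $1<p<\infty$.

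The paper gives no proof of its own for this lemma; it quotes it from Ruotsalainen. Your argument is the standard one behind that result: there are at most $n$ nonzero singular values, and each is bounded by the operator norm. The conjugation $\tau$ in Step 2 is convenient but not needed. The map $A|_{N(A)^{\perp}}$ is already an antilinear bijection onto $R(A)$, and antilinear bijections preserve complex dimension.
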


Let $A$ be an antilinear skew-self-adjoint operator with a decomposition given as in Theorem \ref{polardecomp}, that is, either $N(A)$ is even dimensional or infinite dimensional. By \cite[Proposition 2.6]{Santtu2}, we have that $\sigma(A)={\{0}\}$. But this is not the case with antilinear self-adjoint operators. Hence we can define an analogue of the spectral measure for $A$ using the spectral measure of $|A|$ and using the factorization of $A$ as in Theorem \ref{polardecomp}. 

So let us assume that $A\in \mathcal B_a(H)$ be skew-self-adjoint with either $\dim N(A)$ is even or $\dim N(A)=\infty$. Let $k$ be the anti-conjugation as in Theorem \ref{polardecomp}. Then $k|A|=|A|k$. Let $E$ be the spectral measure for $|A|=(A^{\#}A)^{\frac{1}{2}}$. We define the spectral measure $G$ for $A$ by 
\begin{equation*}
G(M)=kE(M)
\end{equation*}
for every Borel subset $M$ of $\sigma(|A|)$.

The following properties of $G$ can be easily verified.
\begin{enumerate}
\item $G(M)^2=kE(M)kE(M)=k^2E(M)^2=-E(M)$
\item $G(M)^{\#}=E(M)k^{\#}=-G(M)$
\item $G(\sigma(|A|)=k$
\item $G(\cup_{n=1}^{\infty}M_n)=\displaystyle \sum_{n=1}^{\infty}G(M_n)$ for any  sequence ${\{M_n}\}$  of disjoint Borel sets of $\sigma(|A|)$.
\end{enumerate}
With respect to the spectral measure $G$, $A$ can be represented as
\begin{equation*}
A=\int_{\sigma(|A|)}\lambda dG(\lambda).
\end{equation*}

\begin{definition}
    Let $A\in \mathcal K_{a}(H)$. Then the singular values $s_n(A) \; (n\in \mathbb{N})$ are the eigenvalues of $|A|=(A^{\#}A)^{\frac{1}{2}}$. We say $A$  is in the Schatten $p$-class $\mathcal \mathcal K_{a}^{p}(H),\; (1\leq p<\infty)$, if 
 \begin{equation*} 
 \|A\|_{p}=\displaystyle \left (\sum_{n=1}^{\infty}s_n(A)^{p}\right)^{1/p}<\infty.
 \end{equation*}
 \end{definition}
\begin{lemma}\label{orthogonallemma}
Let  $\kappa$ be an anti-conjugation on $H$ and $h\in H$ be non-zero. Then $\langle h,\kappa h\rangle=0$.
\end{lemma}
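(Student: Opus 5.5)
The plan is to use only the two defining properties of an anti-conjugation, namely $\kappa^2=-I$ and $\langle \kappa x,\kappa y\rangle=\langle y,x\rangle$ for all $x,y\in H$. The idea is to compute $\langle h,\kappa h\rangle$ in two ways and show that it equals its own negative. Nothing about the operator $A$ or the factorization in Theorem \ref{polardecomp} is needed; the statement is purely about $\kappa$.

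The key step is to apply the isometry-type identity with $x=\kappa h$ and $y=h$:
\begin{equation*}
\langle \kappa(\kappa h),\kappa h\rangle=\langle h,\kappa h\rangle .
\end{equation*}
Since $\kappa^2=-I$, the left-hand side equals $\langle -h,\kappa h\rangle=-\langle h,\kappa h\rangle$. Here the scalar $-1$ is real, so it can be pulled out of the inner product without any conjugation subtlety. Combining the two expressions gives $\langle h,\kappa h\rangle=-\langle h,\kappa h\rangle$, and therefore $\langle h,\kappa h\rangle=0$.

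The only point to watch is the order of arguments in $\langle \kappa x,\kappa y\rangle=\langle y,x\rangle$. If one instead uses $x=h$ and $y=\kappa h$, one obtains
\begin{equation*}
\langle \kappa h,\kappa^2 h\rangle=\langle \kappa h,h\rangle ,
\end{equation*}
that is, $-\langle \kappa h,h\rangle=\langle \kappa h,h\rangle$. This gives the same conclusion after complex conjugation, so either ordering works and there is no real obstacle. The hypothesis that $h\neq 0$ is not needed for the identity; it is presumably there because the lemma will be used to show that $h$ and $\kappa h$ are orthonormal when $\|h\|=1$, since $\kappa$ is isometric. That fact is what later lets one build the pairs $\{e_n,f_n\}$ with $f_n=\kappa e_n$ for the block skew-diagonal operator.
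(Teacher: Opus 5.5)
Your proof is correct and is the same as the paper's: apply $\langle \kappa x,\kappa y\rangle=\langle y,x\rangle$ with $x=\kappa h$, $y=h$, and use $\kappa^2=-I$ to get $\langle h,\kappa h\rangle=-\langle h,\kappa h\rangle$. You are also right that the hypothesis $h\neq 0$ is not needed.
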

\begin{proof}
We have $\langle h,\kappa h\rangle=\langle \kappa^2 h, \kappa h\rangle=-\langle h,\kappa h \rangle$. Hence we get the conclusion.
\end{proof}

\begin{theorem}\label{rankprojection}
Let $A\in \mathcal B_{a}(H)$ be skew-self-adjoint with $\dim N(A)=\infty$ or $\dim N(A)$ even and $A$ has  decomposition as in Theorem \ref{polardecomp}. Let $f\in H$ be non-zero. Then for every $\epsilon>0$ there exists a finite rank (even rank) operator $P$ and an antilinear skew-self-adjoint operator $K\in \mathcal{K}^{p}_{a}(H)\; (1<p<\infty)$ with $\|K\|_{p}<\epsilon $ such that $f,\kappa f\in R(P)$ and $A+K$ is reduced by $R(P)$. Furthermore, $\kappa P=P\kappa $.
\end{theorem}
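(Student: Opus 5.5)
Following the classical Weyl--von Neumann argument, adapted to the factorization $A=\kappa|A|=|A|\kappa$ of Theorem \ref{polardecomp}, we work entirely with the spectral measure $E$ of $|A|$. By the lemma quoted from \cite[lemma 3.3]{Santtu}, $\kappa$ commutes with every $E(M)$, so $E(M)$ reduces both $|A|$ and $\kappa$.

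Fix $n\in\mathbb N$ and partition $[0,\|A\|]$ into $n$ disjoint intervals $M_1,\dots,M_n$ of length at most $\|A\|/n$, with midpoints $\lambda_j$. Consider the vectors $u_j=E(M_j)f$ and $v_j=E(M_j)\kappa f=\kappa E(M_j)f=\kappa u_j$. By Lemma \ref{orthogonallemma}, $\langle u_j,\kappa u_j\rangle=0$, and $\|\kappa u_j\|=\|u_j\|$. Since the $E(M_j)$ are mutually orthogonal projections, the family $\{u_j,\kappa u_j: u_j\neq 0\}$ is orthogonal. Normalize these to $e_j=u_j/\|u_j\|$ and $f_j=\kappa e_j$. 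Let $P$ be the orthogonal projection onto their span. Then $P$ has even rank at most $2n$. We have $f=\sum u_j\in R(P)$ and $\kappa f=\sum \kappa u_j\in R(P)$. Moreover, $\kappa e_j=f_j$ and $\kappa f_j=-e_j$, so $R(P)$ is $\kappa$-invariant. As $\kappa$ is antiunitary with $\kappa^{\#}=-\kappa$, $R(P)^\perp$ is also invariant, which gives $\kappa P=P\kappa$.

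Next compare $A$ with an operator reduced by $R(P)$. Since $e_j\in R(E(M_j))$, we have $\||A|e_j-\lambda_j e_j\|\le \|A\|/(2n)$, and the vectors $(|A|-\lambda_j)e_j$ lie in the mutually orthogonal ranges of the $E(M_j)$. Hence
\[
\Big\|(|A|-D_0)P\Big\|\le \frac{\|A\|}{2n},\qquad D_0:=\sum_j\lambda_j\bigl(e_j\otimes e_j+f_j\otimes f_j\bigr),
\]
where the $f_j$ estimate follows by applying $\kappa$, which commutes with $|A|$ and $E(M_j)$. Set $B=(|A|-D_0)P$ and $L=B+B^*-PBP$. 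Then $L$ is self-adjoint, of rank at most $4n$, with $\|L\|\le 3\|A\|/(2n)$, and $|A|-L$ is reduced by $R(P)$ (the standard Weyl--von Neumann computation, using $P D_0=D_0P=D_0$). Now define $K=-\kappa L$. Because $\kappa$ commutes with $P$, $|A|$ and $D_0$ (the latter since $\kappa$ swaps $e_j$ and $f_j$ up to sign while $\lambda_j$ is real), $\kappa$ commutes with $L$. Consequently $K^{\#}=-L\kappa^{\#}=L\kappa=\kappa L=-K$, so $K$ is antilinear skew-self-adjoint, and $A+K=\kappa(|A|-L)$ is reduced by $R(P)$. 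Finally, Lemma \ref{Schattennormest} gives $\|K\|_p\le (4n)^{1/p}\cdot 3\|A\|/(2n)\to 0$ as $n\to\infty$, since $p>1$. Choosing $n$ large makes this less than $\epsilon$.

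The main obstacle is verifying that $L$ commutes with $\kappa$, and that this commutation survives the symmetrization $B+B^*-PBP$; adjoints interact with the antilinear $\kappa$ through $\langle \kappa x,\kappa y\rangle=\langle y,x\rangle$. I would check it directly: $\kappa B\kappa^{-1}=B$ because each factor commutes with $\kappa$, and conjugating by the antiunitary $\kappa$ preserves linear adjoints. If that fails, the fallback is to define $L$ explicitly on the basis $\{e_j,f_j\}$ and on its orthocomplement.
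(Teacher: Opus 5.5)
Your argument is correct. It shares the paper's skeleton: the vectors $E(\omega_j)f$ and $\kappa E(\omega_j)f$, their orthonormality via Lemma \ref{orthogonallemma}, a projection $P$ commuting with $\kappa$, and a perturbation of rank $O(n)$ and norm $O(1/n)$ controlled by Lemma \ref{Schattennormest}. Where you differ is in how you build the perturbation.

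The paper works with $A$ itself. It sets $K=-(I-P)AP-PA(I-P)$, so that $A+K=PAP+(I-P)A(I-P)$. It then bounds $\|(I-P)AP\|$ using the approximations $Af_j\approx\lambda g_j$ and $Ag_j\approx-\lambda f_j$, together with the orthogonality of the vectors $(I-P)Af_j$ and $(I-P)Ag_k$.

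You instead do the classical linear compression for the positive operator $|A|$, replace $P|A|P$ by the diagonal $D_0$, and transport the result by $K=-\kappa L$. The commutation you flag as the main obstacle is immediate: $L=|A|P+P|A|-P|A|P-D_0$ is built from $|A|$, $P$ and $D_0$, each of which commutes with $\kappa$. This gives $A+K=(I-P)A(I-P)+\kappa D_0$.

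What your route buys:
\begin{itemize}
\item All norm estimates are for linear self-adjoint operators.
\item Skew-self-adjointness of $K$ is automatic from $\kappa L=L\kappa$ and $\kappa^{\#}=-\kappa$.
\item The block of $A+K$ on $R(P)$ is already block skew-diagonal, with $e_j\mapsto\lambda_j f_j$ and $f_j\mapsto-\lambda_j e_j$. In the iteration proving Theorem \ref{wvN}, the appeal to Theorem \ref{skewdiagonalization} then becomes unnecessary.
\end{itemize}
The paper's version keeps $K$ as the plain off-diagonal part of $A$, at the cost of deferring the diagonalization of $PAP$ and of checking orthogonality relations for the antilinear $A$.

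One small point should be made explicit. The bound $\|(|A|-D_0)P\|\le\|A\|/(2n)$ needs $(|A|-\lambda_j)e_j\perp(|A|-\lambda_j)f_j$ within each block, not just orthogonality across different $E(M_j)$. This holds by Lemma \ref{orthogonallemma}, since $(|A|-\lambda_j)f_j=\kappa(|A|-\lambda_j)e_j$. Alternatively, accept a harmless extra factor $\sqrt{2}$.
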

\begin{proof}
Let $A=\kappa |A|$ be the decomposition as in Theorem \ref{polardecomp}. Let $|A|=\int_{a}^{b}\lambda dE(\lambda)$ be the spectral representation of $|A|$, where $\sigma(|A|)\subseteq [a,b]$ and ${\{E(\lambda):\lambda \in [a,b]}\}$ is the spectral measure associated to $|A|$. Let ${\{\omega_1,\omega_2,\dots, w_n}\}$ be a partition of $[a,b]$ with its norm $\dfrac{b-a}{n}$. Define 
\begin{equation}
f_k=E(\omega_k)f \; \text{and}\; g_k=\kappa E(\omega_k)f\; \text{ for}\; k=1,2,\dots, n.
\end{equation}
Without loss of generality, we assume that each $f_j$ and $g_j$ is non-zero and $\|f_k\|=1=\|g_k\|$ for all $k=1,2,\dots,n$. For $j,k\in {\{1,2,\dots,n}\}$ with $j\neq k$, we have
\begin{align*}
\langle f_j,g_k\rangle&=\langle E(\omega_j)f,\kappa E(\omega_k)f\rangle\\
                       &=\langle \kappa^2 E(\omega_k)f,\kappa E(\omega_j)f\rangle\\
                       &=-\langle E(\omega_j)f,\kappa E(\omega_k)f\rangle.
\end{align*}
That is, $\langle f_j,g_k\rangle=0$. Also, by Lemma \ref{orthogonallemma}, we have $\langle f_j,g_j\rangle=0$ for $j=1,2,\dots,n$.
Hence 
\begin{equation}
\langle f_j,g_k\rangle=0\; \text{ for all}\; j,k\in {\{1,2,\dots,n}\}.
\end{equation}
Clearly, 
\begin{equation*}
\langle f_j,f_k\rangle=\langle E(\omega_j)E(\omega_k)f,f\rangle=0,\text{ for all}\;j\neq k.
\end{equation*}
In the same way, we can show that 
\begin{equation*}
\langle g_j,g_k\rangle=0\; \text{ for}\; j\neq k.
\end{equation*}

So  ${\{f_1,f_2,\dots f_n,g_1,g_2,\dots g_n}\}$  is an orthonormal set. Let us define $$\mathcal M=\text{span}{\{f_1,f_2,\dots f_n,g_1,g_2,\dots g_n}\}$$ and $P$ to  be the orthogonal projection onto $\mathcal {M}$. The projection $P$ can be represented as
\begin{equation}
Ph=\sum_{j=1}^{n}\langle h,f_j\rangle f_j+\sum_{j=1}^n\langle h,g_j\rangle g_j, \; \text{for all}\; h\in H.
\end{equation}

First we show that $\kappa P=P\kappa$. For $h\in H$, we have
\begin{align*}
\kappa Ph&=\sum_{j=1}^n  \left(\overline{\langle h,f_j\rangle} \kappa f_j +\overline{\langle h,g_j\rangle} \kappa  g_j \right)\\
&=\sum_{j=1}^n  \left(\langle f_j,h\rangle g_j +\langle g_j,h\rangle (-f_j) \right)\\
&=\sum_{j=1}^n  \left(   \langle \kappa h, \kappa f_j\rangle g_j +\langle \kappa h,\kappa g_j\rangle (-f_j) \right)\\
&=\sum_{j=1}^n  \left(  \langle \kappa h,g_j\rangle g_j +\langle \kappa h,f_j\rangle f_j \right)\\
&=P\kappa h.
\end{align*}

Since $f=\displaystyle \sum_{j=1}^nE(\omega_j)f$ and $\kappa f=\displaystyle \sum_{j=1}^n \kappa E(\omega_j)f$, it is clear that $f,\kappa f\in \mathcal{M}$.

For $\lambda \in \omega_j$, we have 
\begin{align*}
\|Af_j-\lambda g_j\|&=\|Af_j-\lambda \kappa f_j\|\\
                    &=\|k|A|f_j-\lambda \kappa f_j\|\\
                    &=\||A|f_j-\lambda f_j\|\\
                    &=\||A|-\lambda I\|\|f_j\|\\
                    &\leq \frac{b-a}{n}.
\end{align*}
Next, 
\begin{align*}
\|Ag_k+\lambda f_k\|&=\left\|\kappa |A|\kappa f_k+\lambda f_k \right \|\\
                    &= \left \|\kappa^2|A|f_k+\lambda f_k \right \|\\
                    &=\left \|(|A|-\lambda I)f_k \right \|\\
                    &\leq\frac{b-a}{n}.
\end{align*}

Note that 
\begin{align*}
(I-P)(Ag_j+\lambda f_j)&=Ag_j+\lambda f_j-PAg_j-\lambda f_j\\
                       &=(I-P)Ag_j.
\end{align*}
Using the fact that $P\kappa =\kappa P$, we get 
\begin{equation}\label{estimation1}
\|(I-P)Ag_j\|=\|(I-P)(Ag_j+\lambda f_j)\|\leq \|Ag_j+\lambda f_j)\|\leq \frac{(b-a)}{n}.
\end{equation}
Next, we have 
\begin{align}\label{estimation2}
\|(I-P)(Af_j)\|= \|k(I-P)(Af_j)\| \nonumber \\
               &=\|(I-P)(Akf_j)\| \nonumber \\
               &=\|(I-P)Ag_j\| \nonumber\\
               &\leq \dfrac{(b-a)}{n} \quad (\text{by}\; \ref{estimation1}).
\end{align}

Next, we show that $Af_j\perp Af_k$ and $Ag_j\perp Ag_k$ for all $j\neq k$. First note that 
\begin{equation*}
Af_j=\kappa |A|f_j=|A|\kappa E(\omega_j)f=|A|E(\omega_j)\kappa f =E(\omega_j)|A|\kappa f\in R(E(\omega_j)),
\end{equation*}
for each $j=1,2,\dots,n$. Hence if $j\neq k$, we get that $Af_j\perp Af_k$. Similarly, we have 
$Ag_j=\kappa |A|\kappa E(\omega_j)f=-E(\omega_j)|A|f\in R(E(\omega_j))$. Hence we have $Ag_j\perp Ag_k$ for all $j\neq k$. As $Af_j\in R(E(\omega_j))$ and $Ag_k\in R(E(\omega_k))$, we have that $Af_j\perp Ag_k$ for all $j\neq k$. 

Next, 
\begin{align*}
\langle Af_j,Ag_j\rangle &=\langle |A|^2f_j,g_j\rangle\\
                         &=\langle |A|f_j,|A|\kappa f_j\rangle\\
                         &=\langle |A|f_j, \kappa |A|f_j\rangle\\
                         &=-\langle |A|f_j,\kappa |A|f_j\rangle\\
                         &=0\; (\text{by Lemma }\; \ref{orthogonallemma}).                         
\end{align*}

We have 
\begin{align*}
\langle (I-P)Af_j,(I-P)Ag_k\rangle &=\langle (I-P)Af_j,Ag_k\rangle\\
                           &=\langle Af_j,Ag_k\rangle-\langle PAf_j,Ag_k\rangle\\
                           &=-\langle Af_j,Ag_k\rangle \quad (\text{as}\; Af_j\in R(P))\\
                           &=0.
\end{align*}

Next we estimate $(I-P)APh$ for $h\in H$. By using inequalities (\ref{estimation1}) and (\ref{estimation2}), we get that 
\begin{align*}
\|(I-P)APh\|^2&=\sum_{j=1}^n |\langle h,f_j\rangle|^2\|(I-P)Af_j\|^2+\sum_{j=1}^n|\langle h,g_j\rangle|^2\|(I-P)Ag_j\|^2\\
&\leq  \dfrac{(b-a)^2}{n^2}\left(  \sum_{j=1}^n |\langle h,f_j\rangle|^2+ |\langle h,g_j\rangle|^2\right)\\
&\leq \frac{(b-a)^2}{n^2}\|h\|^2.
\end{align*}
Hence $\|(I-P)AP\|\leq \frac{(b-a)}{n}$. As rank of $(I-P)AP$ is at most $2n$, by Lemma \ref{Schattennormest}, we get that 
\begin{equation}\label{schatternnormeq}
\|(I-P)AP\|_{p}\leq 2 \left(\frac{2}{n}\right)^{\frac{1}{q}}(b-a),
\end{equation}
where $\frac{1}{p}+\frac{1}{q}=1$.

Next, define $B=PAP+(I-P)A(I-P)$ and $K=-(I-P)AP-PA(I-P)$. Then $B$ and $K$ are antilinear, skew-self-adjoint operators. Clearly, $K$ is compact. Also, note that $A=B-K$. It is easy to verify that $BP=PB=PAP$, which infers that $\mathcal M$ reduces $B$. Using inequality \ref{schatternnormeq}, we get that 
$\|K\|_{p}\leq 2^{\frac{1}{q}}(b-a)$.
\end{proof}

Next, we describe a relation between the adjoint and transpose of a bounded linear operator.
\begin{lemma}\label{transposeadjoint}
Let ${\{e_n:n\in \mathbb{N}}\}$ be an orthonormal basis for $H$ and $\tau$ be a conjugation on $H$ such that $\tau(e_n)=e_n$ for all $n\in \mathbb{N}$. If $T\in \mathcal B(H)$, then $ T^{tr}=\tau T^* \tau$, where $T^{tr}$ denote the transpose of $T$ with respect to the basis ${\{e_n:n\in \mathbb{N}}\}$.
    \end{lemma}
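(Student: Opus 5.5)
The plan is to compare the matrix entries of $T^{tr}$ and of $\tau T^*\tau$ in the basis $\{e_n\}$. Both operators are bounded and linear: $\tau T^*\tau$ is linear because it composes two antilinear maps with a linear one. Two bounded linear operators coincide once all their matrix entries $\langle \cdot\, e_n, e_m\rangle$ agree, so this comparison suffices.

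First I fix the definition of the transpose. $T^{tr}$ is the bounded linear operator with $\langle T^{tr}e_n,e_m\rangle=\langle Te_m,e_n\rangle$ for all $m,n$. Its boundedness will come out of the identity itself, since $\tau T^*\tau$ is bounded with norm $\|T\|$.

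Next I compute the entries of $\tau T^*\tau$. Since $\tau e_n=e_n$, we have $\tau T^*\tau e_n=\tau T^*e_n$. Property (iii) of a conjugation, together with $\tau e_m=e_m$, then gives
\begin{equation*}
\langle \tau T^*e_n,e_m\rangle=\langle \tau T^*e_n,\tau e_m\rangle=\langle e_m,T^*e_n\rangle=\langle Te_m,e_n\rangle .
\end{equation*}
This equals $\langle T^{tr}e_n,e_m\rangle$ for all $m,n$.

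Finally, linearity and continuity extend the equality from the basis vectors to all of $H$, giving $T^{tr}=\tau T^*\tau$.

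There is no real obstacle here. The only point needing care is to use the conjugation identity $\langle \tau x,\tau y\rangle=\langle y,x\rangle$ in the correct order, and to make explicit which convention for the transpose is being used.
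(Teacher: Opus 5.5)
Your proposal is correct and follows essentially the paper's own route: both compute the entries $\langle \tau T^*\tau e_n, e_m\rangle$ using $\tau e_n=e_n$ and the conjugation identity to obtain $\langle Te_m,e_n\rangle$, which is the transposed entry. Your remarks that $\tau T^*\tau$ is linear and that this identity itself shows the transposed matrix defines a bounded operator supply details the paper leaves implicit.
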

\begin{proof}
Let $(t_{i,j})$ be the matrix of $T$ with respect to the basis $B={\{e_n:n\in \mathbb N}\}$, where $t_{i,j}=\langle T(e_j),e_i\rangle$ for all $i,j\in \mathbb{N}$. Then the matrix of $T^*$ with respect to $B$ is $(\overline{t_{j,i}})$. The matrix of  $\tau T^*\tau$ with respect to $B$ is given by $(s_{i,j})$, where $s_{i,j}=\langle \tau T\tau (e_j),e_i\rangle$.  That is, 
\begin{align*}
s_{i,j}&=\langle \tau T^*\tau (e_j),e_i\rangle\\
       &= \langle \tau(e_i), T^* \tau (e_j)\rangle\\
       &= \langle e_i, T^*(e_j)\rangle\\
        &= \langle Te_i, e_j\rangle\\
        &=t_{j,i},
        \end{align*}
        the $(j,i)^{th}$ entry of $T$. This completes the proof.
\end{proof}

Here we recall the Takagi factorization theorem for skew-symmetric matrices.
\begin{theorem}\cite[Theorem 1]{standerwiegman}\label{csmdiagonalization}
Let $A$  be $n\times n$ complex skew-symmetric. Then there exists a  $n\times n$ unitary matrix $U$ 
such that
\begin{equation}\label{csmdiageq}
A = U \left( \begin{bmatrix} 
0& r_1\\
-r_1&0
\end{bmatrix} \oplus \dots \oplus \begin{bmatrix} 
0& r_k\\
-r_k&0
\end{bmatrix} \oplus 0_{n-2k}\right)U^{tr},
\end{equation}
where  $r_1,\dots,r_k>0$.
\end{theorem}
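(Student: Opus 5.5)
This statement is the Takagi-type normal form for complex skew-symmetric matrices, cited from \cite[Theorem 1]{standerwiegman}. I would prove it with the finite-dimensional version of the machinery above, namely the factorization of Theorem \ref{polardecomp} and Lemma \ref{orthogonallemma}. Let $\tau$ be the standard conjugation on $\mathbb C^n$, coordinatewise complex conjugation, which fixes the standard basis. By Lemma \ref{transposeadjoint}, $A^{tr}=\tau A^*\tau$, so $A^{tr}=-A$ says exactly that $A$ is $\tau$-skew-symmetric. Consequently the antilinear map $B=\tau A$ satisfies
\[
B^{\#}=A^*\tau=-\tau A=-B ,
\]
so $B$ is antilinear skew-self-adjoint. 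Since $|B|^2=B^{\#}B=-\tau A\tau A$, we get $|B|=|A|$.

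\textbf{Kernel.} The kernel $N(B)=N(A)$ has dimension $n-\operatorname{rank}A$. The rank of a skew-symmetric matrix is even, since it is the size of a maximal nonzero principal Pfaffian minor. Alternatively, $N(A)^{\perp}$ carries the antiunitary skew part $\tilde J$ from the proof of Theorem \ref{polardecomp}, and by \cite[Lemma 4.3]{GarciaTener} such a map cannot exist on an odd-dimensional space, so $\dim N(A)^\perp$ is even. Either way, Theorem \ref{polardecomp} can be applied on $N(A)^{\perp}$ alone. It gives an anticonjugation $\kappa$ on $N(A)^\perp$ commuting with $|A|$, with $B=\kappa|A|$ there.

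\textbf{Eigenbasis.} Decompose $N(A)^\perp$ into the eigenspaces of $|A|$ for its positive eigenvalues $r$. Because $\kappa$ commutes with $|A|$ and $r$ is real, $\kappa$ maps each eigenspace $E_r$ into itself. Inside $E_r$, pick a unit vector $e$. By Lemma \ref{orthogonallemma}, $f=\kappa e$ is orthogonal to $e$. The span of $e$ and $f$ is $\kappa$-invariant, because $\kappa f=-e$. Its orthogonal complement in $E_r$ is also $\kappa$-invariant, since $\kappa$ is antiunitary with $\kappa^{\#}=-\kappa$. Inducting on dimension yields an orthonormal basis $e_1,f_1,\dots,e_k,f_k$ of $N(A)^\perp$ with
\[
Be_j=r_jf_j,\qquad Bf_j=-r_je_j .
\]
Complete this with any orthonormal basis of $N(A)$.

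\textbf{Translating back.} Let $U$ be the unitary whose columns are these basis vectors, in the order $e_1,f_1,\dots$. The relations above give $A e_j=r_j\tau f_j$ and $A f_j=-r_j\tau e_j$. Since $\tau$ is entrywise conjugation, $\tau U=\overline U$, so these relations read
\[
AU=\overline{U}\,\bigl(\text{the block matrix } D\bigr),
\]
up to checking which of $D$ or $D^{tr}=-D$ appears. If the sign comes out wrong, replace each $e_j$ by $f_j$ and $f_j$ by $-e_j$ to flip it. Then $A=\overline U D U^{*}$. Writing $V=\overline U$, which is unitary, we get $U^*=V^{tr}$, so $A=VDV^{tr}$ as required.

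\textbf{Main obstacle.} The step I expect to need the most care is this last translation: the antilinear bookkeeping between $\tau U$, $\overline U$ and $U^{tr}$, together with the sign and ordering conventions inside each $2\times 2$ block. The rest is the finite-dimensional spectral theorem together with Lemma \ref{orthogonallemma}.
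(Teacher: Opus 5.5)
Your strategy is sound, and it runs opposite to the paper. The paper gives no proof of this statement: it quotes Stander--Wiegman and uses the result as a black box to deduce the antilinear normal form of Theorem \ref{skewdiagonalization}. Your eigenbasis step proves that antilinear normal form directly, from the spectral theorem for $|A|$ and an anticonjugation commuting with it, and then reads off the Takagi form. So your route would make this part of the paper self-contained. In finite dimensions you do not even need Theorem \ref{polardecomp}. On $N(A)^{\perp}$ set $\kappa=B|B|^{-1}$. Since $|B|$ is a real polynomial in $B^{\#}B=-B^2$, it commutes with $B$, and one checks $\kappa^{\#}=-\kappa$ and $\kappa^{\#}\kappa=I$.

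The final step, however, fails as written. With the columns of $U$ ordered $e_1,f_1,e_2,f_2,\dots$, the check comes out against you. The relations $Ae_j=r_j\overline{f_j}$ and $Af_j=-r_j\overline{e_j}$ give $AU=\overline{U}D^{tr}=-\overline{U}D$, so the sign is always wrong. For example, take $A=\begin{pmatrix}0&1\\-1&0\end{pmatrix}$ and $e$ the first standard basis vector: then $f=\kappa e$ is minus the second one, and $VDV^{tr}=-A$.

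Your proposed repair $(e_j,f_j)\mapsto(f_j,-e_j)$ changes nothing. Since $\kappa^2=-I$, the new pair is again of the form $(e',\kappa e')$ with $e'=\kappa e_j$. Hence it satisfies the same relations $Be'=r_jf'$ and $Bf'=-r_je'$. In matrix terms, it is a real change of basis $W$ inside the block with $\det W=1$, and $W^{tr}DW=(\det W)D$.

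To flip the sign you must break the relation $f=\kappa e$. Any of the following works:
\begin{itemize}
\item order the columns as $f_1,e_1,f_2,e_2,\dots$;
\item use the pairs $(e_j,-f_j)$;
\item use the pairs $(ie_j,if_j)$, since by antilinearity $\kappa(ie_j)=-if_j$.
\end{itemize}
With any of these you get $AU=\overline{U}D$, and $A=VDV^{tr}$ with $V=\overline U$ follows exactly as you say.
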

Next, we prove a theorem similar to Takagi factorization theorem for antilinear skew-self-adjoint operators.
\begin{theorem}\label{skewdiagonalization}
Let $A$  be $n\times n$ antilinear skew-hermitian matrix. Then there exists a  $n\times n$ unitary matrix $U$ 
such that
$$A = U \left(  r_1\kappa\oplus r_2\kappa \oplus \dots \oplus \dots \oplus r_k\kappa \oplus 0_{n-2k}\right)U^{*}$$
where 
$r_1,\dots,r_k>0$ and  and  $\kappa:\mathbb{C}^2\rightarrow \mathbb{C}^2$ is an anti-conjugation given by
\begin{equation*}
\kappa(x,y)=(-\bar{y},\bar{x}),\text{for all}\; x,y\in \mathbb{C}.
\end{equation*}
    
\end{theorem}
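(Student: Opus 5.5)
The plan is to reduce to the Takagi factorization for complex skew-symmetric matrices, Theorem \ref{csmdiagonalization}, by composing with the standard conjugation. Let $\tau$ denote coordinatewise complex conjugation on $\mathbb{C}^n$, so that $\tau(e_j)=e_j$ for the standard basis. For an antilinear $A$ on $\mathbb{C}^n$ put $T=A\tau$, a linear $n\times n$ matrix with $A=T\tau$. Since $\tau^{\#}=\tau$ (from $\langle \tau x,y\rangle=\overline{\langle x,\tau y\rangle}$), we get $A^{\#}=\tau T^*$. Hence $A^{\#}=-A$ means $\tau T^*=-T\tau$, that is, $\tau T^*\tau=-T$. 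By Lemma \ref{transposeadjoint} this says $T^{tr}=-T$, so $T$ is a complex skew-symmetric matrix.

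Next I apply Theorem \ref{csmdiagonalization}. It gives a unitary $U$ with $T=U\,S\,U^{tr}$, where
\[
S=\bigoplus_{j=1}^k\begin{bmatrix}0&r_j\\-r_j&0\end{bmatrix}\oplus 0_{n-2k}.
\]
Then
\[
A=T\tau=U S U^{tr}\tau.
\]
Lemma \ref{transposeadjoint} gives $U^{tr}=\tau U^*\tau$, so $U^{tr}\tau=\tau U^*$. Therefore
\[
A=U(S\tau)U^*.
\]

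It remains to identify the antilinear operator $S\tau$ blockwise. On one $2\times 2$ block,
\[
\begin{bmatrix}0&r\\-r&0\end{bmatrix}\tau(x,y)=r(\bar y,-\bar x).
\]
This is $-r\kappa(x,y)$ for the $\kappa$ in the statement, where $\kappa(x,y)=(-\bar y,\bar x)$. Two adjustments fix the sign.

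\begin{itemize}
\item Absorb the sign into $U$ by conjugating that block with the unitary $\operatorname{diag}(1,-1)$. For antilinear $\kappa$, $\operatorname{diag}(1,-1)\,\kappa\,\operatorname{diag}(1,-1)=-\kappa$. This change replaces $U$ by $UW$, where $W$ is a block diagonal unitary.
\item Alternatively, swap the two basis vectors with a permutation unitary, which also flips the sign.
\end{itemize}

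After this, $S\tau$ becomes $r_1\kappa\oplus\cdots\oplus r_k\kappa\oplus 0_{n-2k}$, with the zero block unaffected since $0\cdot\tau=0$. Finally I will check that $\kappa$ is an anti-conjugation: $\kappa^2(x,y)=\kappa(-\bar y,\bar x)=(-x,-y)$, and $\kappa$ is isometric.

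The main obstacle is bookkeeping rather than substance. One must confirm that $\tau^{\#}=\tau$ and the identity $U^{tr}\tau=\tau U^*$ so that the transpose turns into an adjoint correctly, and that the sign adjustment is done by a linear unitary commuting appropriately with the antilinear structure. I will do these verifications explicitly on basis vectors, using antilinearity to move scalars: $W\kappa W^*$ with $W=\operatorname{diag}(1,-1)$ real, so $W^*=W$ and conjugation issues do not arise.
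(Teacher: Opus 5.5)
Your proposal is correct and follows essentially the same route as the paper. Both compose with the coordinatewise conjugation $\tau$, apply the Takagi factorization (Theorem \ref{csmdiagonalization}) to the skew-symmetric matrix $A\tau$, and use $U^{tr}\tau=\tau U^{*}$ from Lemma \ref{transposeadjoint} to turn $U^{tr}$ into $U^{*}$. Your explicit sign correction by conjugating with $\operatorname{diag}(1,-1)$ is more careful than the paper, which silently replaces the Takagi blocks $\begin{bmatrix}0&r_j\\-r_j&0\end{bmatrix}$ by $r_j\begin{pmatrix}0&-1\\1&0\end{pmatrix}$ and also calls $A\tau$ complex symmetric rather than skew-symmetric.
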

\begin{proof}
Let $\tau$ be the conjugation on $\mathbb{C}^n$ defined on the orthonormal basis ${\{e_j:j=1,2,\dots,n}\}$ by $\tau(e_j)=e_j$ for all $j=1,2,\dots,n$. Then $ A\tau$ is a complex symmetric, linear operator on $\mathbb{C}^n$. Hence by Theorem \ref{csmdiagonalization}, there exists a unitary operator $U$ and $r_m>0$ for $m=1,2,\dots,k$ such that 
\begin{equation*}
A\tau=U(r_1E_1\oplus r_2E_2\oplus \dots r_kE_{k}\oplus 0_{n-2k} )U^{tr},
\end{equation*} where $E_j=\begin{pmatrix}
0&-1\\
1&0
\end{pmatrix}$ for all $j=1,2\dots, n$. Thus 
\begin{align*}
A&=U(r_1E_1\oplus r_2E_2\oplus \dots r_kE_{k}\oplus 0_{n-2k} )U^{tr}\tau \\
     &= U(r_1E_1\oplus r_2E_2\oplus \dots r_kE_{k}\oplus 0_{n-2k} )\tau \cdot\tau U^{tr}\tau.
    \end{align*}
    Writing $ \tau=(\tau_1+\oplus \tau_2\oplus \dots \oplus \tau_k\oplus \tau_{n-2k})$, where $\tau_{j}(x,y)=(\bar{x},\bar{y})$ for all $x,y\in \mathbb{C}$ and $j=1,2,\dots,k$ and $\tau_{n-2k}=\tau|_{\mathbb C^{n-2k}}$,  we get that
    \begin{equation*}
    A=U(r_1 \kappa \oplus r_2\kappa \dots  \oplus r_k\kappa \oplus 0_{n-2k} )U^*,
    \end{equation*}
where $\kappa:\mathbb{C}^2\rightarrow \mathbb{C}^2$ is an anti-conjugation given by
\begin{equation}
\kappa(x,y)=E_j\tau_j(x,y)=(-\bar{y},\bar{x}),\text{for all}\; x,y\in \mathbb{C},\; \text{for all}\; j=1,2,\dots,n. 
\end{equation}

\end{proof}
\begin{remark}
    Let $A\in \mathcal B_{a}(\mathbb C^n)$ be skew-self-adjoint. Then  by Theorem \ref{skewdiagonalization}, we can conclude that that the rank of $A$ is even. If rank of $A=2k$, for some $k\in \mathbb{N}$, 
     then there exists  real numbers $r_j\geq 0$ for $j=1,2,\dots k$ and an orthonormal basis ${\{e_j,f_j:1\leq j\leq k}\}$ such that   \begin{equation}
    Ax=\sum_{j=1}^{k}r_j \left(\overline{\langle x, e_j\rangle} f_j-\overline{\langle x, f_j\rangle} e_j\rangle \right), \text{for all}\; x\in \mathbb{C}^n.
    \end{equation}
That is, $Ae_j=r_jf_j$ and $Af_j=-r_je_j$ for all $j=1,2,\dots,k$.
\end{remark}
Next we prove our main theorem.

\textbf{Proof of Theorem \ref{wvN}}:
\begin{proof}
To prove the theorem we use  Theorem \ref{rankprojection}, repeatedly. Let ${\{e_n:n\in \mathbb N}\}$ be an orthonormal set such that ${\{e_n,\kappa e_n:n\in \mathbb N}\}$ is an orthonormal basis for $H$. For $f=e_1$, applying Theorem \ref{rankprojection}, we get an even rank projection $P_1$ and $K_1\in \mathcal K_{a}^{p}(H)$, $ (1<p<\infty)$ with $K_1^{\#}=-K_1$ and $\|K_1\|_{p}<\dfrac{\epsilon}{2}$ such that $R(P_1)$ reduces $A+K_1$ and $e_1, \kappa e_1\in R(P_1)$. Decompose $H=R(P_1)\oplus R(P_1)^{\bot}$. 

Taking $f=(I-P_1)e_2$. Applying Theorem \ref{rankprojection} to the operator $A+K_1|_{R(P_1)^{\bot}}$, there exists an even rank projection $P_2$,  and $K_2\in \mathcal K_{a}(R(P_1)^{\bot})$ with $K_2^{\#}=-K_2$ and $\|K_2\|_{p}<\dfrac{\epsilon}{2^2}$ such that $A+K_1+K_2$ is reduced by $R(P_2)$. Extend $K_2$ to $H$ by defining on $R(P_1)$ by $K_2y=0$ for all $y\in R(P_1)$. Note that $e_1,\kappa e_1,e_2,\kappa e_2\in R(P_1+P_2)$.

By proceedings inductively, we can obtain a sequence of finite rank (even rank) projections $(P_n)$ and a sequence $(K_n)$ of antilinear skew-self-adjoint operators such that
\begin{enumerate}
\item $\|K_n\|_{p}<\dfrac{\epsilon}{2^n}$
\item $P_jP_k=0$ for all $j\neq k$
\item $e_j,\kappa e_j\in R(P_1+P_2+\dots +P_n)$ for $j=1,2,\dots n$.
\item $A+K_1+K_2+\dots+K_n$ is reduced by $P_1+P_2+\dots +P_n$
\item $K_n(P_1+P_2+\dots+P_{n-1})=0$.
\end{enumerate}
Next, define $K=\sum_{j=1}^{\infty}K_j$ and $D=A+K$. Then $K$ is antilinear, skew-self-adjoint operator.  Moreover, following  \cite[Page 47, Problem 13]{JBConway},we can show that $K$ to be compact. Also, it is easy to see that $\|K\|_{p}<\epsilon$.  By the construction of $P_n$ it is clear that $P_nD=DP_n$ for each $n\in \mathbb N$. Let us define $D_n=D|_{R(P_n)}$ for each $n\in \mathbb N$. Then $D=\oplus_{n=1}^{\infty} D_n$. As $R(P_n)$ finite-dimensional subspace, by Theorem \ref{skewdiagonalization}, we get that each $D_n$ is a direct sum of  $2\times 2$ skew diagonal operators with respect to a basis and its order is even. Hence $D$ is a block skew diagonal operator.
\end{proof}

\begin{remark}\label{skewdiagonalrmk}
\begin{enumerate}
\item Note that by Lemma, we can get a unitary matrix $U_n$ such that $D_n=U_n\tilde{D_n}U_n^*$, where $D_n$ is given in Theorem \ref{skewdiagonalization}. Hence we can say that $D=U\tilde{D}U^{*}$, where $U=\oplus_{n=1}^{\infty}U_n$ and $\tilde{D}=\oplus_{n=1}^{\infty}\tilde{D_n}$.
\item The operator $D$ can be defined by 
\begin{equation}\label{antilinearskewdiagonal}
Dx=\displaystyle \sum_{n=1}^{\infty}d_n \left(\overline{\langle x,e_n\rangle} f_n-\overline{\langle x,f_n\rangle} e_n \right),\; \text{for all}\; x\in H.
\end{equation}
In fact, we can write $D=\oplus_{n=1}^{\infty}d_n\kappa$, where $d_n\geq 0$ and 
$\kappa:\mathbb{C}^2\rightarrow \mathbb{C}^2$ is an anti-conjugation given by
$\kappa(x,y)=(-\bar{y},\bar{x}),\text{for all}\; x,y\in \mathbb{C}$.
\item Note that $De_n=d_nf_n$ and $Df_n=-d_ne_n$ for all $n\in \mathbb N$. That is,  with respect to the orthonormal basis ${\{e_n,f_n:n\in \mathbb{N}}\}$, the matrix of $D$ is nothing but $D=\displaystyle \bigoplus_{n=1}^{\infty} \begin{pmatrix}
    0&d_n\\
    -d_n&0
\end{pmatrix}$.

\end{enumerate}
\end{remark}

\subsection{Skew-symmetric Operators}
Next, our aim is to establish the Weyl-von Neumann theorem for complex skew-symmetric operators. This can be done by connecting the antilinear skew-self-adjoint operators and complex skew-symmetric operators.  A Weyl-von Neumann type theorem for self-adjoint skew-symmetric operators is considered by S. Zhu in \cite{SZhu}. 

\noindent \textbf{Proof of Theorem \ref{skewsymWvN}:} \vspace{-1 cm}
\begin{proof}
Now, let $T\in \mathcal B(H)$ and $\tau$ be conjugation on $H$. Then $T$ is skew-symmetric if and only $ T\tau$ is antilinear skew-self-adjoint.

Let ${\{e_n:n\in \mathbb{N}}\}$ be an orthonormal basis for $H$ such that $\tau(e_n)=e_n$ for all $n\in \mathbb{N}$. For every $\epsilon>0$, there exists an antilinear skew-self-adjoint compact operator $\Hat{K}$ and an antilinear, block skew-self-adjoint operator $\tilde{D}$, a  skew-diagonal operator such that $T\tau=\Hat{K}+\tilde{D}$. That is, $T=\Hat{K}\tau+\tilde{D}\tau$. Note that $K\tau$ is linear compact and $\tau$-skew-symmetric and $\tilde{D}\tau$ is a $\tau$ skew-symmetric operator which is a direct sum of skew diagonal operators. By Remark \ref{skewdiagonalrmk},  there exists an orthonormal basis ${\{f_n,g_n:n\in \mathbb{N}}\}$ such that 
\begin{equation}\label{skewsymeq}
\tilde{D}x=\displaystyle \sum_{k=1}^{\infty}d_n \left(\overline{\langle x,f_n\rangle} g_n-\overline{\langle x,g_n\rangle} f_n \right),\; \text{for all}\; x\in H.
\end{equation}
Define an operator $U$ on $H$ by $U(e_{2n-1})=f_n$ and $U(e_{2n})=g_n$ for all $n\in \mathbb{N}$ and extend it linearly to the whole of $H$. Then clearly, $U$ is a unitary. We have
\begin{align*}
\tilde{D}\tau x&=\displaystyle \sum_{n\; odd}d_n \overline{\langle \tau x,Ue_{n} \rangle}U(e_n)- \displaystyle \sum_{n\; even}d_n\overline{\langle \tau x,Ue_{n}}\rangle U(e_n) \\
&=\displaystyle \sum_{n\;odd}d_n \overline{\langle U^*\tau x,e_{n} \rangle}U(e_n)- \displaystyle \sum_{n\;even}d_n\overline{\langle U^*\tau x,e_{n}}\rangle U(e_n) \\
&=\displaystyle U\left(\sum_{n\;odd}d_n \overline{\langle U^*\tau x,e_{n} \rangle}e_n- \displaystyle \sum_{n\;even}d_n\overline{\langle U^*\tau x,e_{n}}\rangle e_n\right) \\
&=\displaystyle U \tau \left(\sum_{n \;odd}d_n \langle U^*\tau x,e_{n} \rangle e_n- \displaystyle \sum_{n\; even}d_n \langle U^*\tau x,e_{n}\rangle e_n\right) \\
&=U\tau \Hat{D}U^*\tau x,
\end{align*}
where $\Hat{D}(x)=\displaystyle \sum_{n=1}^{\infty} d_n \left(\langle  x,e_{2n-1} \rangle e_{2n}-\langle x,e_{2n}\rangle e_{2n-1}\right)$ for all $x\in H$. Note that $\Hat{D}$ is $\tau$-skew-symmetric.

We have $U\tau \Hat{D} U^*\tau=U\tau \Hat{D}\tau U^{tr}\tau\tau=U\tau \Hat{D}\tau U^{tr}=UDU^{tr}$, where $D=\tau \Hat{D}\tau$ is a linear $\tau$-skew-symmetric operator. In fact, it is easy to see that $D$ and $\hat{D}$ agree on the orthonormal basis ${\{e_n:n\in \mathbb{N}}\}$, hence $D=\hat{D}$. Note that $D=\oplus_{k=1}^{\infty}d_kC$, where $C=\begin{pmatrix} 0&1\\
-1 &0
\end{pmatrix}$. Hence we can write $T=K+UDU^{tr}$, where $K=\Hat{K}\tau$. It is clear that $\|T-UDU^{tr}\|_{p}<\epsilon$.
\end{proof}
In the next result, we drop the condition on the null space of the operator. We give an another sufficient condition so that the conclusion of Theorem \ref{skewsymWvN} holds true.
\begin{corollary}\label{indpendantssop}
   Let $\tau$ be a conjugation on $H$ and let $T$ be a $\tau$-skew-symmetric operator such that $N(T^*)=N(T)$. Then for every $\epsilon>0$, there exists a $\tau$-skew-symmetric Schatten $p$-class operator $K$, a block  skew-symmetric operator $D$  and a unitary operator $U$ such that $T=K+UDU^{tr}$ and $\|K\|_{p}<\epsilon$, $(1<p<\infty)$. Here $A^{tr}$ denote the transpose of the operator $A$ with respect to an orthonormal basis ${\{e_n:n\in \mathbb{N}}\}$ of $H$ such that $\tau(e_n)=e_n$ for all $n\in \mathbb{N}$. 
\end{corollary}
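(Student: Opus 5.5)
The corollary follows from Theorem \ref{skewsymWvN} once we show that the extra hypothesis on $N(T)$ can always be arranged. If $\dim N(T)=\infty$ or $\dim N(T)$ is even, Theorem \ref{skewsymWvN} applies directly. So the only case to treat is $\dim N(T)=m$ finite and odd. The plan is to strip off the kernel, which is a reducing subspace, and apply the theorem to what remains.

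\textbf{Reduction.} Since $N(T)=N(T^*)$, we have $N(T)^{\perp}=\overline{R(T)}$, and $N(T)$ reduces $T$. So $T=0\oplus T_1$ on $N(T)\oplus N(T)^{\perp}$, with $T_1$ injective. We also need $\tau$ to respect this splitting. From $\tau T^*\tau=-T$ we get $\tau N(T^*)=N(T)$, so $\tau N(T)=N(T)$, and since $\tau$ is isometric, $\tau N(T)^{\perp}=N(T)^{\perp}$. Thus $\tau=\tau_0\oplus\tau_1$ with each $\tau_i$ a conjugation, and $T_1$ is $\tau_1$-skew-symmetric with $N(T_1)=\{0\}$, which has even dimension $0$.

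\textbf{Applying the theorem to $T_1$.} Take an orthonormal basis of $N(T)^{\perp}$ fixed by $\tau_1$ and apply Theorem \ref{skewsymWvN} to $T_1$. This gives $T_1=K_1+U_1D_1U_1^{tr}$ with $\|K_1\|_p<\epsilon$, $K_1$ $\tau_1$-skew-symmetric and $D_1$ block skew-diagonal. Now set $K=0\oplus K_1$, $D=0_m\oplus D_1$, $U=I\oplus U_1$, and choose the basis of $H$ to be a $\tau_0$-fixed basis of $N(T)$ followed by the $\tau_1$-fixed basis. Transposes with respect to this basis split as direct sums, so $U^{tr}=I\oplus U_1^{tr}$ and $T=K+UDU^{tr}$. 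Here $K$ is $\tau$-skew-symmetric and $\|K\|_p=\|K_1\|_p<\epsilon$.

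\textbf{Main obstacle.} The step needing care is that $D$ should be a "block skew-symmetric" operator even though it now carries an odd-dimensional zero block. The $m$ zero summands are harmless provided the meaning of block diagonal allows $d_n=0$. Pairing $m-1$ of them into zero $2\times2$ blocks leaves one $1\times1$ zero block, and I would argue it is still a direct sum of skew-symmetric blocks, since the zero $1\times1$ matrix is skew-symmetric.

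A cleaner alternative is to absorb the leftover zero dimension into $D_1$. One could try to re-index the basis so that one kernel vector together with a vector from a $D_1$ block contributes only a finite-rank, small-norm correction, but that would perturb $K$. I would keep the direct-sum argument and note explicitly that the definition of $D$ permits a single trailing zero entry.

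Finally, relabel the basis so that it is enumerated by $\mathbb N$, and check that a unitary which permutes $\tau$-fixed basis vectors preserves the transpose relation of Lemma \ref{transposeadjoint}. This holds because the transpose is $\tau(\cdot)^*\tau$, which does not depend on the basis as long as every basis vector is fixed by $\tau$.
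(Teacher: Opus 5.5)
Your proposal is correct and follows essentially the same route as the paper. Both arguments split $H=N(T)\oplus N(T)^{\perp}$, check that $\tau$ reduces along this splitting, apply Theorem \ref{skewsymWvN} to the injective part, and take $K=0\oplus K_1$, $D=0\oplus D_1$, $U=I\oplus U_1$. The paper uses $D=0\oplus D_1$ without comment and does not split off the even/infinite case first, so your remarks on the possibly odd zero block and on $U^{tr}=\tau U^*\tau$ being basis-independent (Lemma \ref{transposeadjoint}) add care rather than change the argument.
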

\begin{proof}
Since $N(T)$ reduces $T$, we can write $T=0\oplus T_2$, where $T_2=T|_{N(T)^{\bot}}$. Note that $T_2$ is one to one. Let $\tau=\begin{pmatrix}
    \tau_{11} & \tau_{12}\\
    \tau_{21} & \tau_{22}
\end{pmatrix}$, be the operator matrix of $\tau$ with respect to the decomposition $H=N(T)\oplus N(T)^{\bot}$. Since $\tau^{\#}=\tau$, we get that $\tau_{jj}^{\#}=\tau_{jj}$ for $j=1,2$. Also we have $\tau_{21}=\tau_{12}^{\#}$. Since $\tau T^*\tau=-T$, it follows that $N(T)$ is invariant under $\tau$. That is, $\tau_{21}=0$. Also, $\tau_{12}=0$. Hence $N(T)$ reduces $\tau$. So we have that $\tau=\tau_{11}\oplus \tau_{22}$. The equation $\tau T^*\tau=-T$ imply that $\tau_{22}T_2\tau_{22}=-T_2$. By Theorem \ref{skewsymWvN}, for each $\epsilon>0$, there exists $\tau_{22}$ skew-symmetric, compact operator $K_2$ with $\|K_2\|_{p}<\epsilon$ and a unitary $U_2$ on $N(T)^{\bot}$ such that $T_2=K_2+U_2D_2U_2^{tr}$, where $D_2$ is a block skew-diagonal operator.  Since $T=0\oplus T_2$, we can take $K=\begin{pmatrix}
    0&0\\
    0&K_2
\end{pmatrix}$, $D=\begin{pmatrix}
    0&0\\
    0&D_2
\end{pmatrix}$ and $U=\begin{pmatrix}
    I_1&0\\
    0&U_2
\end{pmatrix}$, where $I_1$ is the identity operator on $N(T)$.  With these, we get that $T=K+UDU^{tr}$.  Also, it is easy to verify that $K$ is $\tau$ skew-symmetric, $\|K\|_p<\epsilon$ and $D$ is  block skew-symmetric. 
\end{proof}
    
In a recent article  Q. Bu and S. Zhu  in \cite[Theorem 1.1 (i)]{SZhu} proved the following theorem for complex skew-symmetric operators:
\begin{theorem*}\label{SZhu} 
Let $T\in \mathcal B(H)$. Let $C$ be a conjugation on $H$ such that $T^*=T=-CTC$ and $p\in (0,\infty)$. Then for every $\epsilon>0$, there exists a self-adjoint diagonal operator $D$ on $H$ with $CDC=-C$ such that $\|T-D\|_{p}<\epsilon$. 
\end{theorem*}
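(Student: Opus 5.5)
The statement quoted at the end is the Bu--Zhu theorem \cite[Theorem 1.1 (i)]{SZhu}. I read its hypothesis as $T=T^*$ and $CTC=-T$, and its conclusion as asking for a self-adjoint diagonal $D$ with $CDC=-D$ (the printed "$CDC=-C$" is a typo, since $C$ is invertible). The plan is to run the classical Weyl--von Neumann construction while keeping every object symmetric under $C$.

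\emph{Key observation.} The relation $CTC=-T$ says that $C$ intertwines $T$ and $-T$. So if $E$ is the spectral measure of $T$, then $CE(M)C=E(-M)$ for every Borel set $M$. This follows from the functional calculus, using that $C$ is antilinear and $\overline{\lambda}=\lambda$ on the real spectrum. Consequently the spectrum is symmetric, and for $f\in H$ and $M\subset(0,\infty)$ the vectors $E(M)f$ and $CE(M)f$ lie in the mutually orthogonal ranges $R(E(M))$ and $R(E(-M))$.

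\emph{Local step}, the analogue of Theorem~\ref{rankprojection}. Given $f$ and $\epsilon$, partition $\sigma(T)\subseteq[-a,a]$ into $n$ intervals placed symmetrically about $0$. Put $0$ in a single interval $\omega_0$ of its own, and pair every other interval $\omega$ with $-\omega$. Take the vectors $E(\omega)f$ and $CE(\omega)f=E(-\omega)Cf$, and let $P$ be the projection onto their span. Add in the same way the vectors coming from $Cf$. Then $CP=PC$, the range of $P$ contains $f$ and $Cf$, and the rank of $P$ is at most about $2n$. The standard estimate $\|(T-\lambda)E(\omega)f\|\le |\omega|\,\|E(\omega)f\|$ gives $\|(I-P)TP\|\le 2a/n$. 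Setting $K=-(I-P)TP-PT(I-P)$ gives a self-adjoint operator, and since $C$ commutes with $P$, also $CKC=-K$. By Lemma~\ref{Schattennormest} (or its linear version), $\|K\|_p\le 2(2n)^{1/p}\,a/n\to0$ for $p>1$. Finally, $T+K$ is reduced by $R(P)$.

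\emph{Iteration.} As in the proof of Theorem~\ref{wvN}, exhaust a $C$-real orthonormal basis $\{e_n\}$ by mutually orthogonal $C$-invariant finite rank projections $P_k$, with $\|K_k\|_p<\epsilon 2^{-k}$. Then $T+\sum K_k=\bigoplus_k D_k$, where each $D_k$ is a self-adjoint matrix on the $C$-invariant space $R(P_k)$ with $CD_kC=-D_k$. Diagonalize each $D_k$ compatibly with $C$. If $D_kx=\lambda x$, then $D_kCx=-\lambda Cx$, so the eigenspaces for $\lambda$ and $-\lambda$ are swapped by $C$. Choose an orthonormal basis $\{x_i\}$ of the $\lambda>0$ eigenspaces and take $\{Cx_i\}$ for $-\lambda$. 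On $\ker D_k$, which is $C$-invariant, take a $C$-real basis. The resulting diagonal operator $D$ satisfies $CDC=-D$, and $\|T-D\|_p<\epsilon$.

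\emph{Main obstacle.} The hard part is the range $0<p\le1$. The estimate above fails there, and one needs the sharper Kuroda-type argument: a single well-chosen $C$-symmetric finite rank perturbation, made small in the relevant $p$-quasinorm, using quasi-triangle constants for the series. My first attempt would be to redo Kuroda's proof, symmetrized by averaging $K$ with $-CKC$, which preserves both the norm bound and self-adjointness. Failing that, I would cite \cite{SZhu} directly for that range.
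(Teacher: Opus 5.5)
Your argument for $1<p<\infty$ is sound. Three ingredients make it work: $CE(M)C=E(-M)$, a partition closed under $\omega\mapsto-\omega$ with $0$ in its own symmetric interval, and the span of $\{E(\omega)f,\,E(\omega)Cf\}$. Together they give a $C$-invariant range, hence $CP=PC$ and $CKC=-K$. The usual bound $\|(I-P)TP\|\le 2a/n$ with rank at most $2n$ then gives $\|K\|_p\to0$. This is the $C$-antisymmetric counterpart of the scheme the paper uses in Theorems \ref{rankprojection} and \ref{wvN}; the paper itself only quotes this result from \cite{SZhu} and proves nothing about it. Also, $CDC=-D$ is automatic for $D=T+\sum_k K_k$, so your compatible diagonalization at the end is optional.

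The genuine gap is your treatment of $0<p\le1$. This is not a harder case that a sharper Kuroda-type estimate could handle: the statement is false there. Take $H=L^2[-1,1]$, $(Tf)(x)=xf(x)$ and $(Cf)(x)=\overline{f(-x)}$. Then $C$ is a conjugation, $T^*=T=-CTC$, and $T$ has purely absolutely continuous spectrum. Suppose a diagonal $D$ had $T-D\in\mathcal S_p$ for some $p\le1$. Since $\|\cdot\|_1\le\|\cdot\|_p$, $T-D$ would be a self-adjoint trace class operator. The Kato--Rosenblum theorem would then make the absolutely continuous parts of $T$ and $D$ unitarily equivalent, which is impossible because $D$ has pure point spectrum. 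Kuroda's theorem is stated only for norm ideals other than the trace class for exactly this reason. So neither redoing Kuroda's proof nor citing \cite{SZhu} can cover $p\le 1$. The ``$p\in(0,\infty)$'' in the quotation is a misprint of the same kind as ``$CDC=-C$'', which you did catch. The correct range is $1<p<\infty$, consistent with the paper's own restriction in Theorems \ref{wvN} and \ref{skewsymWvN}, and your argument already proves that case.

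Separately, averaging $K$ with $-CKC$ would not help even where the statement is true. If $T+K=D$ is diagonal in a basis $\{u_i\}$, then $T+\tfrac12(K-CKC)=\tfrac12(D-CDC)$. This is a sum of an operator diagonal in $\{u_i\}$ and one diagonal in $\{Cu_i\}$, which in general is not diagonal. The $C$-symmetry has to be built into the construction, as you did through $CP=PC$.
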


It should be noted that the above result is different from the one we proved in Corollary \ref{indpendantssop}. 
\begin{center}
	\textbf{Ethics declarations}
\end{center}

\noindent \textbf{Conflict of Interest}\\
The author has no competing interests to declare  that are  relevant to the content of this article.

\noindent \textbf{Ethical Approval}\\
Ethics approval is not applicable to this article, as in this study no research involving human or animal subjects was performed.

\end{document}